
\documentclass[12pt]{amsart}
\usepackage{amsmath,amsthm,amsfonts,amssymb,mathrsfs}
\usepackage{color}

\usepackage{tikz}

\usepackage{amssymb,cite}

\usepackage{hyperref}
\date{\today}

 \setlength{\textwidth}{18.6truecm}
 \setlength{\textheight}{25.truecm}
 \setlength{\oddsidemargin}{-30pt}
 \setlength{\evensidemargin}{-30pt}
 \setlength{\topmargin}{-35pt}


\usepackage{hyperref}
\input{xy}
 \xyoption{all}
 \xyoption{arc}

\newtheorem{theorem}{Theorem}

\newtheorem{proposition}{Proposition}
\newtheorem{corollary}{Corollary}

\newtheorem{lemma}{Lemma}
\theoremstyle{definition}

\newtheorem{example}{Example}
\newtheorem{remark}{Remark}

\begin{document}

\title[On the semigroup of monoid endomorphisms of the semigroup $\boldsymbol{B}_{\omega}^{\mathscr{F}}$]{On the semigroup of monoid endomorphisms of the semigroup $\boldsymbol{B}_{\omega}^{\mathscr{F}}$ with a two-element family $\mathscr{F}$ of inductive nonempty subsets of $\omega$}
\author{Oleg Gutik and Inna Pozdniakova}
\address{Ivan Franko National University of Lviv, Universytetska 1, Lviv, 79000, Ukraine}
\email{oleg.gutik@lnu.edu.ua, pozdnyakova.inna@gmail.com}

\keywords{Bicyclic monoid, inverse semigroup, bicyclic extension, monoid endomorphism, non-injective, Green's relations, left-zero semigroup, direct product}

\subjclass[2020]{20M10, 20M12, 20M15, 20M18. 20M20.}

\begin{abstract}
We study the semigroup of non-injective monoid endomorphisms of the semigroup $\boldsymbol{B}_{\omega}^{\mathscr{F}}$ with a two-elements family $\mathscr{F}$ of inductive nonempty subsets of $\omega$. We describe the structure of elements of the semigroup $\boldsymbol{End}^*_0(\boldsymbol{B}_{\omega}^{\mathscr{F}})$ of non-injective monoid endomorphisms of the semigroup $\boldsymbol{B}_{\omega}^{\mathscr{F}}$. In particular we show that its subsemigroup $\boldsymbol{End}^*(\boldsymbol{B}_{\omega}^{\mathscr{F}})$ of  non-injective non-annihilating  monoid endomorphisms of the semigroup $\boldsymbol{B}_{\omega}^{\mathscr{F}}$ is isomorphic to the direct product of the two-element left-zero semigroup and the multiplicative semigroup of positive integers and describe Green's relations on  $\boldsymbol{End}^*(\boldsymbol{B}_{\omega}^{\mathscr{F}})$.
\end{abstract}

\maketitle



We shall follow the terminology of~\cite{Clifford-Preston-1961, Clifford-Preston-1967, Lawson=1998}.
By $\omega$ we denote the set of all non-negative integers, by $\mathbb{N}$ the set of all positive integers, and by $\mathbb{Z}$ the set of all integers.

\smallskip

Let $\mathscr{P}(\omega)$ be  the family of all subsets of $\omega$. For any $F\in\mathscr{P}(\omega)$ and $n\in\mathbb{Z}$ we put $n+F=\{n+k\colon k\in F\}$ if $F\neq\varnothing$ and $n+\varnothing=\varnothing$. A subfamily $\mathscr{F}\subseteq\mathscr{P}(\omega)$ is called \emph{${\omega}$-closed} if $F_1\cap(-n+F_2)\in\mathscr{F}$ for all $n\in\omega$ and $F_1,F_2\in\mathscr{F}$. For any $a\in\omega$ we denote $[a)=\{x\in\omega\colon x\geqslant a\}$.

\smallskip

A subset $A$ of $\omega$ is said to be \emph{inductive}, if $i\in A$ implies $i+1\in A$. Obviously, $\varnothing$ is an inductive subset of $\omega$.

\begin{remark}[\cite{Gutik-Mykhalenych=2021}]\label{remark-1.1}
\begin{enumerate}
  \item\label{remark-1.1(1)} By Lemma~6 from \cite{Gutik-Mykhalenych=2020} a nonempty subset $F\subseteq \omega$ is inductive in $\omega$ if and only $(-1+F)\cap F=F$.
  \item\label{remark-1.1(2)} Since the set $\omega$ with the usual order is well-ordered, for any nonempty inductive subset $F$ in $\omega$ there exists a nonnegative integer $n_F\in\omega$ such that $[n_F)=F$.
  \item\label{remark-1.1(3)} Statement \eqref{remark-1.1(2)} implies that the intersection of an arbitrary finite family of nonempty inductive subsets in $\omega$ is a nonempty inductive subset of  $\omega$.
\end{enumerate}
\end{remark}

A semigroup $S$ is called {\it inverse} if for any
element $x\in S$ there exists a unique $x^{-1}\in S$ such that
$xx^{-1}x=x$ and $x^{-1}xx^{-1}=x^{-1}$. The element $x^{-1}$ is
called the {\it inverse of} $x\in S$. If $S$ is an inverse
semigroup, then the function $\operatorname{inv}\colon S\to S$
which assigns to every element $x$ of $S$ its inverse element
$x^{-1}$ is called the {\it inversion}.


\smallskip

If $S$ is a semigroup, then we shall denote the subset of all
idempotents in $S$ by $E(S)$. If $S$ is an inverse semigroup, then
$E(S)$ is closed under multiplication and we shall refer to $E(S)$ as a
\emph{band} (or the \emph{band of} $S$). Then the semigroup
operation on $S$ determines the following partial order $\preccurlyeq$
on $E(S)$: $e\preccurlyeq f$ if and only if $ef=fe=e$. This order is
called the {\em natural partial order} on $E(S)$. A \emph{semilattice} is a commutative semigroup of idempotents.

\smallskip

If $S$ is an inverse semigroup then the semigroup operation on $S$ determines the following partial order $\preccurlyeq$
on $S$: $s\preccurlyeq t$ if and only if there exists $e\in E(S)$ such that $s=te$. This order is
called the {\em natural partial order} on $S$ \cite{Wagner-1952}.

\smallskip

If $S$ is a semigroup, then we shall denote the Green relations on $S$ by $\mathscr{R}$, $\mathscr{L}$, $\mathscr{J}$, $\mathscr{D}$ and $\mathscr{H}$ (see \cite[Section~2.1]{Clifford-Preston-1961}):
\begin{align*}
    &\qquad a\mathscr{R}b \mbox{ if and only if } aS^1=bS^1;\\
    &\qquad a\mathscr{L}b \mbox{ if and only if } S^1a=S^1b;\\
    &\qquad a\mathscr{J}b \mbox{ if and only if } S^1aS^1=S^1bS^1;\\
    &\qquad \mathscr{D}=\mathscr{L}\circ\mathscr{R}=
          \mathscr{R}\circ\mathscr{L};\\
    &\qquad \mathscr{H}=\mathscr{L}\cap\mathscr{R}.
\end{align*}
The $\mathscr{L}$-class [$\mathscr{R}$-class, $\mathscr{H}$-class, $\mathscr{D}$-class, $\mathscr{J}$-class] of the semigroup $S$ containing the element $a\in S$ will be denoted by $\boldsymbol{L}_a$ [$\boldsymbol{R}_a$, $\boldsymbol{H}_a$, $\boldsymbol{D}_a$, $\boldsymbol{J}_a$].

\smallskip

The bicyclic monoid ${\mathscr{C}}(p,q)$ is the semigroup with the identity $1$ generated by two elements $p$ and $q$ subjected only to the condition $pq=1$. The semigroup operation on ${\mathscr{C}}(p,q)$ is determined as
follows:
\begin{equation*}
    q^kp^l\cdot q^mp^n=q^{k+m-\min\{l,m\}}p^{l+n-\min\{l,m\}}.
\end{equation*}
It is well known that the bicyclic monoid ${\mathscr{C}}(p,q)$ is a bisimple (and hence simple) combinatorial $E$-unitary inverse semigroup and every non-trivial congruence on ${\mathscr{C}}(p,q)$ is a group congruence \cite{Clifford-Preston-1961}.

\smallskip

On the set $\boldsymbol{B}_{\omega}=\omega\times\omega$ we define the semigroup operation ``$\cdot$'' in the following way
\begin{equation}\label{eq-1.1}
  (i_1,j_1)\cdot(i_2,j_2)=
  \left\{
    \begin{array}{ll}
      (i_1-j_1+i_2,j_2), & \hbox{if~} j_1\leqslant i_2;\\
      (i_1,j_1-i_2+j_2), & \hbox{if~} j_1\geqslant i_2.
    \end{array}
  \right.
\end{equation}
It is well known that the bicyclic monoid $\mathscr{C}(p,q)$ is isomorphic to the semigroup $\boldsymbol{B}_{\omega}$ by the mapping $\mathfrak{h}\colon \mathscr{C}(p,q)\to \boldsymbol{B}_{\omega}$, $q^kp^l\mapsto (k,l)$, $k,l\in\omega$ (see: \cite[Section~1.12]{Clifford-Preston-1961} or \cite[Exercise IV.1.11$(ii)$]{Petrich-1984}). Later we identify the bicyclic monoid $\mathscr{C}(p,q)$ with the semigroup $\boldsymbol{B}_{\omega}$ by the mapping $\mathfrak{h}$.

\smallskip

Next we shall describe the construction which is introduced in \cite{Gutik-Mykhalenych=2020}.

Let $\boldsymbol{B}_{\omega}$ be the bicyclic monoid and $\mathscr{F}$ be an ${\omega}$-closed subfamily of $\mathscr{P}(\omega)$. On the set $\boldsymbol{B}_{\omega}\times\mathscr{F}$ we define the semigroup operation ``$\cdot$'' in the following way
\begin{equation}\label{eq-1.2}
  (i_1,j_1,F_1)\cdot(i_2,j_2,F_2)=
  \left\{
    \begin{array}{ll}
      (i_1-j_1+i_2,j_2,(j_1-i_2+F_1)\cap F_2), & \hbox{if~} j_1\leqslant i_2;\\
      (i_1,j_1-i_2+j_2,F_1\cap (i_2-j_1+F_2)), & \hbox{if~} j_1\geqslant i_2.
    \end{array}
  \right.
\end{equation}
In \cite{Gutik-Mykhalenych=2020} is proved that if the family $\mathscr{F}\subseteq\mathscr{P}(\omega)$ is ${\omega}$-closed then $(\boldsymbol{B}_{\omega}\times\mathscr{F},\cdot)$ is a semigroup. Moreover, if an ${\omega}$-closed family  $\mathscr{F}\subseteq\mathscr{P}(\omega)$ contains the empty set $\varnothing$ then the set
$ 
  \boldsymbol{I}=\{(i,j,\varnothing)\colon i,j\in\omega\}
$ 
is an ideal of the semigroup $(\boldsymbol{B}_{\omega}\times\mathscr{F},\cdot)$. For any ${\omega}$-closed family $\mathscr{F}\subseteq\mathscr{P}(\omega)$ the following semigroup
\begin{equation*}
  \boldsymbol{B}_{\omega}^{\mathscr{F}}=
\left\{
  \begin{array}{ll}
    (\boldsymbol{B}_{\omega}\times\mathscr{F},\cdot)/\boldsymbol{I}, & \hbox{if~} \varnothing\in\mathscr{F};\\
    (\boldsymbol{B}_{\omega}\times\mathscr{F},\cdot), & \hbox{if~} \varnothing\notin\mathscr{F}
  \end{array}
\right.
\end{equation*}
is defined in \cite{Gutik-Mykhalenych=2020}. The semigroup $\boldsymbol{B}_{\omega}^{\mathscr{F}}$ generalizes the bicyclic monoid and the countable semigroup of matrix units. It is proven in \cite{Gutik-Mykhalenych=2020} that $\boldsymbol{B}_{\omega}^{\mathscr{F}}$ is a combinatorial inverse semigroup and Green's relations, the natural partial order on $\boldsymbol{B}_{\omega}^{\mathscr{F}}$ and its set of idempotents are described.
Here, the criteria when the semigroup $\boldsymbol{B}_{\omega}^{\mathscr{F}}$ is simple, $0$-simple, bisimple, $0$-bisimple, or it has the identity, are given.
In particularly in \cite{Gutik-Mykhalenych=2020} it is proved that the semigroup $\boldsymbol{B}_{\omega}^{\mathscr{F}}$ is isomorphic to the semigrpoup of ${\omega}{\times}{\omega}$-matrix units if and only if $\mathscr{F}$ consists of a singleton set and the empty set, and $\boldsymbol{B}_{\omega}^{\mathscr{F}}$ is isomorphic to the bicyclic monoid if and only if $\mathscr{F}$ consists of a non-empty inductive subset of $\omega$.

\smallskip

Group congruences on the semigroup  $\boldsymbol{B}_{\omega}^{\mathscr{F}}$ and its homomorphic retracts  in the case when an ${\omega}$-closed family $\mathscr{F}$ consists of inductive non-empty subsets of $\omega$ are studied in \cite{Gutik-Mykhalenych=2021}. It is proven that a congruence $\mathfrak{C}$ on $\boldsymbol{B}_{\omega}^{\mathscr{F}}$ is a group congruence if and only if its restriction on a subsemigroup of $\boldsymbol{B}_{\omega}^{\mathscr{F}}$, which is isomorphic to the bicyclic semigroup, is not the identity relation. Also in \cite{Gutik-Mykhalenych=2021}, all non-trivial homomorphic retracts and isomorphisms  of the semigroup $\boldsymbol{B}_{\omega}^{\mathscr{F}}$ are described. In \cite{Gutik-Mykhalenych=2022} it is proved that an injective endomorphism $\varepsilon$ of the semigroup $\boldsymbol{B}_{\omega}^{\mathscr{F}}$ is the indentity transformation if and only if  $\varepsilon$ has three distinct fixed points, which is equivalent to existence non-idempotent element $(i,j,[p))\in\boldsymbol{B}_{\omega}^{\mathscr{F}}$ such that  $(i,j,[p))\varepsilon=(i,j,[p))$.

\smallskip

In \cite{Gutik-Lysetska=2021, Lysetska=2020} the algebraic structure of the semigroup $\boldsymbol{B}_{\omega}^{\mathscr{F}}$ is established in the case when ${\omega}$-closed family $\mathscr{F}$ consists of atomic subsets of ${\omega}$.

\smallskip

It is well-known that every automorphism of the bicyclic monoid $\boldsymbol{B}_{\omega}$  is the identity self-map of $\boldsymbol{B}_{\omega}$ \cite{Clifford-Preston-1961}, and hence the group $\mathbf{Aut}(\boldsymbol{B}_{\omega})$ of automorphisms of $\boldsymbol{B}_{\omega}$ is trivial. In \cite{Gutik-Prokhorenkova-Sekh=2021} it is proved that the semigroup $\mathrm{\mathbf{End}}(\boldsymbol{B}_{\omega})$ of all endomorphisms of the bicyclic semigroup $\boldsymbol{B}_{\omega}$ is isomorphic to the semidirect products $(\omega,+)\rtimes_\varphi(\omega,*)$, where $+$ and $*$ are the usual addition and the usual multiplication on $\omega$.

\smallskip

In the paper \cite{Gutik-Pozdniakova=2023-a} we study injective endomorphisms of the semigroup $\boldsymbol{B}_{\omega}^{\mathscr{F}}$ with the two-elements family $\mathscr{F}$ of inductive nonempty subsets of $\omega$. We describe the elements of the semigroup $\boldsymbol{End}^1_*(\boldsymbol{B}_{\omega}^{\mathscr{F}})$ of all injective monoid endomorphisms of the monoid $\boldsymbol{B}_{\omega}^{\mathscr{F}}$.
In particular we show that every element of the semigroup $\boldsymbol{End}^1_*(\boldsymbol{B}_{\omega}^{\mathscr{F}})$ has a form either $\alpha_{k,p}$ or $\beta_{k,p}$, where the endomorphism $\alpha_{k,p}$ is defined by the formulae
\begin{align*}
  (i,j,[0))\alpha_{k,p}&=(ki,kj,[0)), \\
  (i,j,[1))\alpha_{k,p}&=(p+ki,p+kj,[1)),
\end{align*}
for an arbitrary positive integer $k$ and any $p\in\{0,\ldots,k-1\}$, and the endomorphism  $\beta_{k,p}$ is defined by the formulae
\begin{align*}
  (i,j,[0))\beta_{k,p}&=(ki,kj,[0)), \\
  (i,j,[1))\beta_{k,p}&=(p+ki,p+kj,[0)),
\end{align*}
an arbitrary positive integer $k\geqslant 2$ and any $p\in\{1,\ldots,k-1\}$.
In \cite{Gutik-Pozdniakova=2023-a} we describe the product of elements of the semigroup $\boldsymbol{End}^1_*(\boldsymbol{B}_{\omega}^{\mathscr{F}})$:
\begin{align*}
  \alpha_{k_1,p_1}\alpha_{k_2,p_2} &=\alpha_{k_1k_2,p_2+k_2p_1}; \\
  \alpha_{k_1,p_1}\beta_{k_2,p_2}  &=\beta_{k_1k_2,p_2+k_2p_1}; \\
  \beta_{k_1,p_1}\beta_{k_2,p_2}   &=\beta_{k_1k_2,k_2p_1}; \\
  \beta_{k_1,p_1}\alpha_{k_2,p_2}  &=\beta_{k_1k_2,k_2p_1}.
\end{align*}
Also, here we prove that Green's relations $\mathscr{R}$, $\mathscr{L}$, $\mathscr{H}$, $\mathscr{D}$, and $\mathscr{J}$  on $\boldsymbol{End}^1_*(\boldsymbol{B}_{\omega}^{\mathscr{F}})$ coincide with the equality relation.

\smallskip

Later we assume that an ${\omega}$-closed family $\mathscr{F}$ consists of two nonempty inductive nonempty subsets of $\omega$.

\smallskip

This paper is a continuation of \cite{Gutik-Pozdniakova=2023-a}.
We study non-injective monoid endomorphisms of the semigroup $\boldsymbol{B}_{\omega}^{\mathscr{F}}$. We describe the structure of elements of the semigroup $\boldsymbol{End}_0^*(\boldsymbol{B}_{\omega}^{\mathscr{F}})$ of all non-injective monoid endomorphisms of the semigroup $\boldsymbol{B}_{\omega}^{\mathscr{F}}$. In particular we show that its subsemigroup $\boldsymbol{End}^*(\boldsymbol{B}_{\omega}^{\mathscr{F}})$ of all non-injective non-annihilating monoid endomorphisms of the semigroup $\boldsymbol{B}_{\omega}^{\mathscr{F}}$ is isomorphic to the direct product the two-element left-zero semigroup and the multiplicative semigroup of positive integers and describe Green's relations on  $\boldsymbol{End}^*(\boldsymbol{B}_{\omega}^{\mathscr{F}})$.

\smallskip


\begin{remark}\label{remark-2.1}
By Proposition~1 of \cite{Gutik-Mykhalenych=2021} for any $\omega$-closed family $\mathscr{F}$ of inductive subsets in $\mathscr{P}(\omega)$ there exists an $\omega$-closed family $\mathscr{F}^*$ of inductive subsets in $\mathscr{P}(\omega)$ such that $[0)\in \mathscr{F}^*$ and the semigroups $\boldsymbol{B}_{\omega}^{\mathscr{F}}$ and $\boldsymbol{B}_{\omega}^{\mathscr{F}^*}$ are isomorphic. Hence without loss of generality we may assume that the family $\mathscr{F}$ contains the set $[0)$.
\end{remark}

\smallskip

If $\mathscr{F}$ is an arbitrary $\omega$-closed family $\mathscr{F}$ of inductive subsets in $\mathscr{P}(\omega)$ and $[s)\in \mathscr{F}$ for some $s\in \omega$ then
\begin{equation*}
  \boldsymbol{B}_{\omega}^{\{[s)\}}=\{(i,j,[s))\colon i,j\in\omega\}
\end{equation*}
is a subsemigroup of $\boldsymbol{B}_{\omega}^{\mathscr{F}}$ \cite{Gutik-Mykhalenych=2021} and by Proposition~3 of \cite{Gutik-Mykhalenych=2020} the semigroup $\boldsymbol{B}_{\omega}^{\{[s)\}}$ is isomorphic to the bicyclic semigroup.

\begin{lemma}\label{lemma-2.2}
Let $\mathscr{F}=\{[0), [1)\}$ and let $\mathfrak{e}$ be a monoid endomorphism of the semigroup $\boldsymbol{B}_{\omega}^{\mathscr{F}}$. If $(i_1,j_1,F)\mathfrak{e}=(i_2,j_2,F)\mathfrak{e}$ for distinct two elements $(i_1,j_1,F),(i_2,j_2,F)$ of $\boldsymbol{B}_{\omega}^{\mathscr{F}}$ for some $F\in\mathscr{F}$ then $\mathfrak{e}$ is the annihilating endomorphism of $\boldsymbol{B}_{\omega}^{\mathscr{F}}$.
\end{lemma}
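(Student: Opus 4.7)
The plan is to exploit the fact that for each $F\in\mathscr{F}$ the subsemigroup $\boldsymbol{B}_{\omega}^{\{F\}}=\{(i,j,F)\colon i,j\in\omega\}$ is isomorphic to the bicyclic monoid, together with the fact that $\boldsymbol{B}_{\omega}^{\mathscr{F}}$ is a combinatorial inverse semigroup and hence contains no non-trivial subgroups. By hypothesis the restriction $\mathfrak{e}|_{\boldsymbol{B}_{\omega}^{\{F\}}}$ is a non-injective homomorphism from the bicyclic monoid; since every non-trivial congruence on the bicyclic monoid is a group congruence, the image is a group, and the combinatoriality of $\boldsymbol{B}_{\omega}^{\mathscr{F}}$ forces this group to be trivial. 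So there exists an idempotent $g\in E(\boldsymbol{B}_{\omega}^{\mathscr{F}})$ with $(i,j,F)\mathfrak{e}=g$ for all $i,j\in\omega$.

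In the case $F=[0)$, the element $(0,0,[0))$ is the identity of $\boldsymbol{B}_{\omega}^{\mathscr{F}}$, so $g=(0,0,[0))\mathfrak{e}=(0,0,[0))$, and every $(i,j,[0))$ is mapped to the identity. To collapse the remaining $[1)$-layer, I would use the readily verified factorisation $(i,j,[1))=(i,0,[0))(0,0,[1))(0,j,[0))$; applying $\mathfrak{e}$ gives $(i,j,[1))\mathfrak{e}=(0,0,[1))\mathfrak{e}$, which is independent of $i$ and $j$. It then remains to show $(0,0,[1))\mathfrak{e}=(0,0,[0))$, and this comes from applying $\mathfrak{e}$ to the identity $(0,0,[1))(1,0,[0))=(1,0,[0))$: the right-hand side equals $(0,0,[0))$, while the left-hand side becomes $(0,0,[1))\mathfrak{e}\cdot(0,0,[0))=(0,0,[1))\mathfrak{e}$.

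In the case $F=[1)$ we have $g=(a,a,F')$ for some $a\in\omega$ and $F'\in\mathscr{F}$, which need not be the identity. The idea is to reduce to the previous case by showing that $\mathfrak{e}|_{\boldsymbol{B}_{\omega}^{\{[0)\}}}$ is also non-injective. Using the factorisation $(i,0,[1))=(i,0,[0))(0,0,[1))$ together with $(i,0,[1))\mathfrak{e}=g$, one obtains $(i,0,[0))\mathfrak{e}\cdot g=g$ for every $i\in\omega$. Writing $(i,0,[0))\mathfrak{e}=(x_i,y_i,F_i)$ and unpacking this equation via the multiplication formula~\eqref{eq-1.2}, the branch $y_i>a$ leads to a contradiction (it would force $x_i=y_i=a$), while the branch $y_i\leqslant a$ forces $x_i=y_i\leqslant a$. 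Consequently $(i,0,[0))\mathfrak{e}$ is an idempotent of $\boldsymbol{B}_{\omega}^{\mathscr{F}}$ whose first coordinate lies in $\{0,1,\ldots,a\}$. Since there are only finitely many such idempotents but infinitely many choices of $i$, two distinct values of $i$ produce the same image, so $\mathfrak{e}|_{\boldsymbol{B}_{\omega}^{\{[0)\}}}$ is not injective and the first case applies.

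The main obstacle I anticipate is the explicit case analysis of $(i,0,[0))\mathfrak{e}\cdot g=g$ in the second case: the piecewise nature of the multiplication in $\boldsymbol{B}_{\omega}^{\mathscr{F}}$ requires splitting on the sign of $y_i-a$ and separately comparing the third components, and one must carefully check that the surviving branch pins the first coordinate of $(i,0,[0))\mathfrak{e}$ into a bounded set independent of~$i$. Once this boundedness is established, the finiteness argument and the reduction to the $F=[0)$ case are routine.
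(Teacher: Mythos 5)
Your argument is correct, but it takes a genuinely different and more self-contained route than the paper. The paper disposes of the lemma in two sentences by citing Theorem~1 of \cite{Gutik-Mykhalenych=2021}: a congruence on $\boldsymbol{B}_{\omega}^{\mathscr{F}}$ whose restriction to a bicyclic subsemigroup is not the identity relation is a group congruence on all of $\boldsymbol{B}_{\omega}^{\mathscr{F}}$, so the image of the \emph{whole} monoid is a subgroup, hence, by combinatoriality (Theorem~4$(iii)$ of \cite{Gutik-Mykhalenych=2020}), the trivial group $\{(0,0,[0))\}$. You use only the classical fact about homomorphic images of the bicyclic monoid to collapse the single layer $\boldsymbol{B}_{\omega}^{\{F\}}$ to one idempotent, and then propagate the collapse to the other layer by explicit element computations. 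These check out: the factorisations $(i,j,[1))=(i,0,[0))\cdot(0,0,[1))\cdot(0,j,[0))$ and $(i,0,[1))=(i,0,[0))\cdot(0,0,[1))$ are valid under \eqref{eq-1.2}; the identity $(0,0,[1))\cdot(1,0,[0))=(1,0,[0))$ does force $(0,0,[1))\mathfrak{e}=(0,0,[0))$ once the $[0)$-layer is collapsed; and in the case $F=[1)$ the equation $(x_i,y_i,F_i)\cdot(a,a,F')=(a,a,F')$ really does exclude the branch $y_i>a$ (it would give $y_i=a$) and pin $x_i=y_i\leqslant a$ in the other branch, so the pigeonhole argument (finitely many idempotents $(t,t,F_i)$ with $t\leqslant a$ and $F_i\in\mathscr{F}$, infinitely many $i$) legitimately reduces to the case $F=[0)$. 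The paper's route buys brevity at the price of a strong external ingredient about congruences on $\boldsymbol{B}_{\omega}^{\mathscr{F}}$; yours buys independence from that classification at the price of the case analysis on \eqref{eq-1.2}, essentially re-proving the special case of that theorem that is needed here, layer by layer rather than globally.
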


\begin{proof}
By  Theorem~1 of \cite{Gutik-Mykhalenych=2021} the image $(\boldsymbol{B}_{\omega}^{\mathscr{F}})\mathfrak{e}$ is a subgroup of $\boldsymbol{B}_{\omega}^{\mathscr{F}}$.
By Theorem~4$(iii)$ of \cite{Gutik-Mykhalenych=2020} every $\mathscr{H}$-class in $\boldsymbol{B}_{\omega}^{\mathscr{F}}$ is a singleton, and hence $\mathfrak{e}$ is the annihilating monoid endomorphism of $\boldsymbol{B}_{\omega}^{\mathscr{F}}$.
\end{proof}

\begin{lemma}\label{lemma-2.3}
Let $\mathscr{F}=\{[0), [1)\}$. Then $(\boldsymbol{B}_{\omega}^{\mathscr{F}})\mathfrak{e}\subseteq \boldsymbol{B}_{\omega}^{\{[0)\}}$ for any non-injective monoid endomorphism $\mathfrak{e}$ of $\boldsymbol{B}_{\omega}^{\mathscr{F}}$.
\end{lemma}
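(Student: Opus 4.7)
The plan is to pin down the explicit form of $\mathfrak{e}$ on the two ``layers'' of $\boldsymbol{B}_{\omega}^{\mathscr{F}}$ distinguished by the third coordinate, and then to observe that only one value of the third coordinate of $(0,0,[1))\mathfrak{e}$ is compatible with non-injectivity. If $\mathfrak{e}$ is annihilating then its image is $\{(0,0,[0))\}\subseteq\boldsymbol{B}_{\omega}^{\{[0)\}}$ and there is nothing to prove, so I will assume $\mathfrak{e}$ is not annihilating; by Lemma~\ref{lemma-2.2} this forces the restriction $\mathfrak{e}|_{\boldsymbol{B}_{\omega}^{\{[0)\}}}$ to be injective.

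Since $(0,0,[0))$ is the two-sided identity of $\boldsymbol{B}_{\omega}^{\mathscr{F}}$, it is fixed by the monoid endomorphism $\mathfrak{e}$. I would next set $p'=(0,1,[0))\mathfrak{e}$ and $q'=(1,0,[0))\mathfrak{e}$ and solve the equation $p'q'=(0,0,[0))$ by a case analysis on the two branches of~\eqref{eq-1.2}; this forces $p'=(0,k,[0))$ and $q'=(k,0,[0))$ for some $k\geqslant 1$, so that $(i,j,[0))\mathfrak{e}=(ki,kj,[0))$ for all $i,j\in\omega$. Next, the image of the idempotent $(0,0,[1))$ is itself an idempotent, hence of the form $(a,a,F')$ for some $a\in\omega$ and $F'\in\{[0),[1)\}$. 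Using the factorisations $(1,0,[1))=(1,0,[0))\cdot(0,0,[1))$ and $(0,1,[1))=(0,0,[1))\cdot(0,1,[0))$, together with $(i,j,[1))=(1,0,[1))^{i}(0,1,[1))^{j}$, a straightforward induction on $i$ and $j$ will give $(i,j,[1))\mathfrak{e}=(ki+a,\,kj+a,\,F')$.

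It then remains to rule out $F'=[1)$. If $F'=[1)$, the images of $\boldsymbol{B}_{\omega}^{\{[0)\}}$ and $\boldsymbol{B}_{\omega}^{\{[1)\}}$ under $\mathfrak{e}$ lie in disjoint subsets of $\boldsymbol{B}_{\omega}^{\mathscr{F}}$ distinguished by the third coordinate, and since $k\geqslant 1$ the maps $(i,j)\mapsto(ki,kj)$ and $(i,j)\mapsto(ki+a,kj+a)$ are both injective, forcing $\mathfrak{e}$ itself to be injective --- a contradiction. Therefore $F'=[0)$, giving $(i,j,[1))\mathfrak{e}=(ki+a,kj+a,[0))\in\boldsymbol{B}_{\omega}^{\{[0)\}}$ and hence $(\boldsymbol{B}_{\omega}^{\mathscr{F}})\mathfrak{e}\subseteq\boldsymbol{B}_{\omega}^{\{[0)\}}$. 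I expect the main technical obstacle to be the case analysis extracting $p'$ and $q'$ from the single relation $p'q'=(0,0,[0))$; the remaining computations are essentially bookkeeping inside two copies of the bicyclic monoid.
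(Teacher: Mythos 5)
Your argument is correct, but it takes a more computational route than the paper. The paper treats the two layers $\boldsymbol{B}_{\omega}^{\{[0)\}}$ and $\boldsymbol{B}_{\omega}^{\{[1)\}}$ abstractly as copies of the bicyclic monoid: it invokes Corollary~1.32 of \cite{Clifford-Preston-1961} (a homomorphic image of the bicyclic monoid is either bicyclic or a cyclic group), kills the group case using the fact that all $\mathscr{H}$-classes of $\boldsymbol{B}_{\omega}^{\mathscr{F}}$ are singletons together with Lemma~\ref{lemma-2.2}, and uses Proposition~4 of \cite{Gutik-Mykhalenych=2021} to locate a bicyclic image containing $(0,0,[0))$; the decisive step --- if $(\boldsymbol{B}_{\omega}^{\{[1)\}})\mathfrak{e}\subseteq\boldsymbol{B}_{\omega}^{\{[1)\}}$ then $\mathfrak{e}$ is injective, a contradiction --- is the same as yours. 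You instead pin down $\mathfrak{e}$ explicitly on generators, and the details check out: the relation $p'q'=(0,0,[0))$ does force $p'=(0,k,[0))$ and $q'=(k,0,[0))$ (in either branch of \eqref{eq-1.2} one gets the first coordinate of $p'$ and the second of $q'$ equal to $0$, the remaining coordinates equal, and both families equal to $[0)$), with $k\geqslant 1$ supplied by the injectivity of $\mathfrak{e}{\upharpoonleft}_{\boldsymbol{B}_{\omega}^{\{[0)\}}}$ from Lemma~\ref{lemma-2.2}; the factorisations through the idempotent $(0,0,[1))\mathfrak{e}=(a,a,F')$ then yield $(i,j,[1))\mathfrak{e}=(ki+a,kj+a,F')$, and $F'=[1)$ is excluded because the two layers would then map injectively into disjoint sets. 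What your version buys is an explicit formula for $\mathfrak{e}$ --- in effect you have already done much of the work of Theorem~\ref{theorem-2.8} --- at the cost of redoing by hand the case analysis that the paper outsources to Lemma~2 of \cite{Gutik-Prokhorenkova-Sekh=2021} and the cited structural results.
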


\begin{proof}
By Proposition~3 of \cite{Gutik-Mykhalenych=2020} the subsemigroup $\boldsymbol{B}_{\omega}^{\{[0)\}}$ of $\boldsymbol{B}_{\omega}^{\mathscr{F}}$ is isomorphic to the bicyclic semigroup and hence by Corollary 1.32 of \cite{Clifford-Preston-1961} the image $(\boldsymbol{B}_{\omega}^{\{[0)\}})\mathfrak{e}$ either is isomorphic to the bicyclic semigroup or is a cyclic subgroup of $\boldsymbol{B}_{\omega}^{\mathscr{F}}$. Since $(0,0,[0))\mathfrak{e}=(0,0,[0))$, Proposition 4 from \cite{Gutik-Mykhalenych=2021} implies that $(\boldsymbol{B}_{\omega}^{\{[0)\}})\mathfrak{e}\subseteq \boldsymbol{B}_{\omega}^{\{[0)\}}$ in the case when the image $(\boldsymbol{B}_{\omega}^{\{[0)\}})\mathfrak{e}$ is isomorphic to the bicyclic semigroup. In the other case  we have that the equality $(0,0,[0))\mathfrak{e}=(0,0,[0))$ implies that
\begin{equation*}
(\boldsymbol{B}_{\omega}^{\{[0)\}})\mathfrak{e}\subseteq \{(0,0,[0))\}\subseteq \boldsymbol{B}_{\omega}^{\{[0)\}},
\end{equation*}
because by Theorem~4$(iii)$ of \cite{Gutik-Mykhalenych=2020} every $\mathscr{H}$-class in $\boldsymbol{B}_{\omega}^{\mathscr{F}}$ is a singleton.

\smallskip

Next, by Proposition~3 of \cite{Gutik-Mykhalenych=2020} the subsemigroup $\boldsymbol{B}_{\omega}^{\{[1)\}}$ of $\boldsymbol{B}_{\omega}^{\mathscr{F}}$ is isomorphic to the bicyclic semigroup and hence by Corollary 1.32 of \cite{Clifford-Preston-1961} the image $(\boldsymbol{B}_{\omega}^{\{[1)\}})\mathfrak{e}$ either is isomorphic to the bicyclic semigroup or is a cyclic subgroup of $\boldsymbol{B}_{\omega}^{\mathscr{F}}$. Suppose that the image $(\boldsymbol{B}_{\omega}^{\{[1)\}})\mathfrak{e}$ is isomorphic to the bicyclic semigroup and $(\boldsymbol{B}_{\omega}^{\{[1)\}})\mathfrak{e}\subseteq\boldsymbol{B}_{\omega}^{\{[1)\}}$. Then monoid endomorphism $\mathfrak{e}$ of $\boldsymbol{B}_{\omega}^{\mathscr{F}}$ is injective. Indeed, injectivity of the restriction $\mathfrak{e}{\upharpoonleft}_{\boldsymbol{B}_{\omega}^{\{[1)\}}}\boldsymbol{B}_{\omega}^{\{[1)\}}\to \boldsymbol{B}_{\omega}^{\{[1)\}}$, Proposition 4 of \cite{Gutik-Mykhalenych=2021}, Corollary 1.32 of \cite{Clifford-Preston-1961}, Theorem~4$(iii)$ of \cite{Gutik-Mykhalenych=2020}, and the equality $(0,0,[0))\mathfrak{e}=(0,0,[0))$ imply that either the restriction $\mathfrak{e}{\upharpoonleft}_{\boldsymbol{B}_{\omega}^{\{[0)\}}}\boldsymbol{B}_{\omega}^{\{[0)\}}\to \boldsymbol{B}_{\omega}^{\{[0)\}}$ is an injective mapping or is an annihilating endomorphism. In the case when the restriction $\mathfrak{e}{\upharpoonleft}_{\boldsymbol{B}_{\omega}^{\{[0)\}}}\boldsymbol{B}_{\omega}^{\{[0)\}}\to \boldsymbol{B}_{\omega}^{\{[0)\}}$ is an injective mapping we get that the endomorphism $\mathfrak{e}$ is injective. If the image $(\boldsymbol{B}_{\omega}^{\{[0)\}})\mathfrak{e}$ is a singleton then by Lemma~\ref{lemma-2.2} we have that $\mathfrak{e}$ is the annihilating monoid  endomorphism of $\boldsymbol{B}_{\omega}^{\mathscr{F}}$. In the both cases we obtain that $(\boldsymbol{B}_{\omega}^{\mathscr{F}})\mathfrak{e}\subseteq \boldsymbol{B}_{\omega}^{\{[0)\}}$.
\end{proof}

\begin{example}\label{example-2.4}
Let $\mathscr{F}=\{[0), [1)\}$ and $k$ be an arbitrary non-negative integer. We define a map $\gamma_k\colon \boldsymbol{B}_{\omega}^{\mathscr{F}}\to \boldsymbol{B}_{\omega}^{\mathscr{F}}$ by the formulae
\begin{equation*}
  (i,j,[0))\gamma_k=(i,j,[1))\gamma_k=(ki,kj,[0))
\end{equation*}
for all $i,j\in\omega$.

\smallskip

We claim that $\gamma_k\colon \boldsymbol{B}_{\omega}^{\mathscr{F}}\to \boldsymbol{B}_{\omega}^{\mathscr{F}}$ is an endomorphism. Example~2 and Proposition~5 from \cite{Gutik-Mykhalenych=2021} imply that the map $\gamma_1\colon \boldsymbol{B}_{\omega}^{\mathscr{F}}\to \boldsymbol{B}_{\omega}^{\mathscr{F}}$ is a homomorphic retraction of the monoid $\boldsymbol{B}_{\omega}^{\mathscr{F}}$, and hence it is a monoid endomorphism of $\boldsymbol{B}_{\omega}^{\mathscr{F}}$. By Lemma~2 of \cite{Gutik-Prokhorenkova-Sekh=2021} every monoid endomorphism $\mathfrak{h}$ of the semigroup $\boldsymbol{B}_{\omega}$ has the following form
\begin{equation*}
  (i,j)\mathfrak{h}=(ki,kj), \qquad \hbox{for some} \quad k\in\omega.
\end{equation*}
This implies that the map $\gamma_k$ is a monoid endomorphism of $\boldsymbol{B}_{\omega}^{\mathscr{F}}$.
\end{example}

\begin{example}\label{example-2.5}
Let $\mathscr{F}=\{[0), [1)\}$ and $k$ be an arbitrary non-negative integer. We define a map $\delta_k\colon \boldsymbol{B}_{\omega}^{\mathscr{F}}\to \boldsymbol{B}_{\omega}^{\mathscr{F}}$ by the formulae
\begin{equation*}
  (i,j,[0))\delta_k=(ki,kj,[0)) \qquad \hbox{and} \qquad (i,j,[1))\delta_k=(k(i+1),k(j+1),[0))
\end{equation*}
for all $i,j\in\omega$.
\end{example}

\begin{proposition}\label{proposition-2.6}
Let $\mathscr{F}=\{[0), [1)\}$. Then for any $k\in\omega$ the map $\delta_k$ is an endomorphism of the monoid $\boldsymbol{B}_{\omega}^{\mathscr{F}}$.
\end{proposition}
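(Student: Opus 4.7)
The plan is to recognize $\delta_k$ as a composition of two simpler monoid homomorphisms. Define an auxiliary map $\theta\colon \boldsymbol{B}_{\omega}^{\mathscr{F}}\to \boldsymbol{B}_{\omega}^{\{[0)\}}$ by
\[
(i,j,[0))\theta=(i,j,[0)), \qquad (i,j,[1))\theta=(i+1,j+1,[0)),
\]
and let $\mu_k\colon \boldsymbol{B}_{\omega}^{\{[0)\}}\to \boldsymbol{B}_{\omega}^{\{[0)\}}$ be the scaling $(i,j,[0))\mapsto (ki,kj,[0))$. By Proposition~3 of \cite{Gutik-Mykhalenych=2020} the semigroup $\boldsymbol{B}_{\omega}^{\{[0)\}}$ is isomorphic to the bicyclic monoid, so Lemma~2 of \cite{Gutik-Prokhorenkova-Sekh=2021} gives at once that $\mu_k$ is a monoid endomorphism. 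A direct comparison of formulae shows that $\delta_k=\theta\mu_k$, so the whole problem reduces to verifying that $\theta$ is a monoid homomorphism.

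To do this I would run a case analysis on products in $\boldsymbol{B}_{\omega}^{\mathscr{F}}$ using the multiplication~\eqref{eq-1.2}: four cases according to $(F_1,F_2)\in\{[0),[1)\}^2$ and two subcases per case according as $j_1\leqslant i_2$ or $j_1\geqslant i_2$. In each subcase the intersection $(j_1-i_2+F_1)\cap F_2$ or $F_1\cap(i_2-j_1+F_2)$ is evaluated; by Remark~\ref{remark-1.1}\eqref{remark-1.1(3)} it is always one of $[0)$ or $[1)$, and its value depends only on whether $j_1-i_2$ is strictly negative or zero (respectively $i_2-j_1$). On the right-hand side one computes the bicyclic product of the shifted pairs $(i_\ell,j_\ell,F_\ell)\theta$ inside $\boldsymbol{B}_{\omega}^{\{[0)\}}\cong\boldsymbol{B}_{\omega}$, and the critical observation is that the boundary case $j_1=i_2$ (which forces the intersection to equal $[1)$ when at least one $F_\ell=[1)$) is precisely the case in which the $\pm 1$ offsets introduced by $\theta$ shift the comparison $j_1\leqslant i_2$ to the opposite side, reproducing the required $(i_1+1,j_1-i_2+j_2+1,[0))$ or $(i_1-j_1+i_2+1,j_2+1,[0))$ on the nose.

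The subtlest case is the mixed product $(i_1,j_1,[1))\cdot(i_2,j_2,[0))$ with $j_1\leqslant i_2$, in which the intersection equals $[0)$ for $j_1<i_2$ but jumps to $[1)$ for $j_1=i_2$; correspondingly, the right-hand side $(i_1+1,j_1+1)\cdot(i_2,j_2)$ splits according as $j_1+1\leqslant i_2$ (bicyclic first clause) or $j_1+1\geqslant i_2$ (bicyclic second clause), and the two subcases reconcile exactly. The remaining seven subcases are analogous and shorter. Finally, $\theta$ sends the identity $(0,0,[0))$ to itself, so $\delta_k=\theta\mu_k$ is a monoid endomorphism of $\boldsymbol{B}_{\omega}^{\mathscr{F}}$.

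The only obstacle is bookkeeping: keeping the eight subcases straight and tracking when the third-coordinate intersection transitions between $[0)$ and $[1)$. There is no conceptual obstruction once the factorization $\delta_k=\theta\mu_k$ is identified, because $\mu_k$ inherits its endomorphism property from the already-known structure of $\mathrm{\mathbf{End}}(\boldsymbol{B}_{\omega})$.
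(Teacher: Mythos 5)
Your argument is correct, and it is organized differently from the paper's proof. The paper proves Proposition~\ref{proposition-2.6} head-on: it invokes Lemma~2 of \cite{Gutik-Prokhorenkova-Sekh=2021} only to dispose of the restrictions of $\delta_k$ to the two bicyclic subsemigroups $\boldsymbol{B}_{\omega}^{\{[0)\}}$ and $\boldsymbol{B}_{\omega}^{\{[1)\}}$, and then verifies the two mixed products $(i_1,j_1,[0))\cdot(i_2,j_2,[1))$ and $(i_1,j_1,[1))\cdot(i_2,j_2,[0))$ by a long explicit computation in which the scaling factor $k$ is carried through every line (with the $k=0$ case treated separately). You instead factor $\delta_k=\theta\mu_k$, where $\theta$ is exactly the map $\delta_1$ and $\mu_k$ is the standard scaling endomorphism of the bicyclic submonoid $\boldsymbol{B}_{\omega}^{\{[0)\}}$; this isolates the only genuinely new content --- that the ``collapse $[1)$ to $[0)$ with a unit shift'' map is a homomorphism --- and delegates the scaling entirely to the known description of $\mathrm{\mathbf{End}}(\boldsymbol{B}_{\omega})$, uniformly in $k$ (including $k=0$). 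The residual verification for $\theta$ is the same kind of case analysis as in the paper, and you correctly identify the one delicate point: at the boundary $j_1=i_2$ the third coordinate of the product in $\boldsymbol{B}_{\omega}^{\mathscr{F}}$ jumps to $[1)$ whenever one factor carries $[1)$, and on the other side the $+1$ offsets introduced by $\theta$ move the comparison across the threshold of the bicyclic multiplication \eqref{eq-1.1}, so the two clauses agree (e.g.\ for $j_1=i_2$ both sides give $(i_1+1,j_2+1,[0))$). I checked the subcases you single out and they reconcile as claimed. What your route buys is a cleaner computation free of the factor $k$ and a conceptual explanation of why $\delta_k$ works (it is $\delta_1$ followed by a bicyclic endomorphism, mirroring how the paper treats $\gamma_k$ in Example~\ref{example-2.4} via the retraction $\gamma_1$); what the paper's route buys is self-containedness, since it never needs to introduce the auxiliary map $\theta$ or argue that a composite of homomorphisms landing in a subsemigroup is again an endomorphism. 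To make your write-up fully rigorous you would still need to display all eight subcases for $\theta$, but there is no gap in the strategy.
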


\begin{proof}
Since by Proposition~3 of \cite{Gutik-Mykhalenych=2020} the subsemigroups $\boldsymbol{B}_{\omega}^{\{[0)\}}$ and $\boldsymbol{B}_{\omega}^{\{[1)\}}$ of $\boldsymbol{B}_{\omega}^{\mathscr{F}}$ are isomorphic to the bicyclic semigroup, by Lemma~2 of \cite{Gutik-Prokhorenkova-Sekh=2021} the restrictions $\delta_k{\upharpoonleft}_{\boldsymbol{B}_{\omega}^{\{[0)\}}}\colon \boldsymbol{B}_{\omega}^{\{[0)\}}$ $\to \boldsymbol{B}_{\omega}^{\mathscr{F}}$ and $\delta_k{\upharpoonleft}_{\boldsymbol{B}_{\omega}^{[1)}}\colon \boldsymbol{B}_{\omega}^{\{[1)\}}\to \boldsymbol{B}_{\omega}^{\mathscr{F}}$ of $\delta_k$ are homomorphisms. Hence it sufficient to show that the following equalities
\begin{align*}
  (i_1,j_1,[0))\delta_k\cdot (i_2,j_2,[1))\delta_k&=((i_1,j_1,[0))\cdot (i_2,j_2,[1)))\delta_k; \\
  (i_1,j_1,[1))\delta_k\cdot (i_2,j_2,[0))\delta_k&=((i_1,j_1,[1))\cdot (i_2,j_2,[0)))\delta_k,
\end{align*}
hold for any $i_1,j_1,i_2,j_2\in\omega$.

\smallskip

We observe that the above equalities are trivial in the case when $k=0$. Hence later we assume that $k$ is a positive integer.

\smallskip

Then we have that
\begin{align*}
  (i_1,&j_1,[0))\delta_k\cdot (i_2,j_2,[1))\delta_k=(ki_1,kj_1,[0))\cdot(k(i_2+1),k(j_2+1),[0))= \\
   &=
    \left\{
      \begin{array}{ll}
        (ki_1{+}k(i_2{+}1){-}kj_1,k(j_2{+}1),(kj_1{-}k(i_2{+}1){+}[0))\cap[0)), & \hbox{if~} kj_1<k(i_2+1);\\
        (ki_1,k(j_2+1),[0)\cap[0)),                               & \hbox{if~} kj_1=k(i_2+1);\\
        (ki_1,kj_1{+}k(j_2{+}1){-}k(i_2{+}1),[0)\cap(k(i_2{+}1){-}kj_1{+}[0))), & \hbox{if~} kj_1>k(i_2+1)
      \end{array}
    \right.
    \\
    &=
    \left\{
      \begin{array}{ll}
        (k(i_1+i_2+1-j_1),k(j_2+1),[0)), & \hbox{if~} j_1<i_2+1;\\
        (ki_1,k(j_2+1),[0)),             & \hbox{if~} j_1=i_2+1;\\
        (ki_1,k(j_1+j_2-i_2),[0)),       & \hbox{if~} j_1>i_2+1
      \end{array}
    \right.
    \\
    &=
    \left\{
      \begin{array}{ll}
        (k(i_1+i_2+1-j_1),k(j_2+1),[0)), & \hbox{if~} j_1<i_2;\\
        (k(i_1+i_2+1-j_1),k(j_2+1),[0)), & \hbox{if~} j_1=i_2;\\
        (ki_1,k(j_2+1),[0)),             & \hbox{if~} j_1=i_2+1;\\
        (ki_1,k(j_1+j_2-i_2),[0)),       & \hbox{if~} j_1>i_2+1
      \end{array}
    \right.
    \\
    &=
    \left\{
      \begin{array}{ll}
        (k(i_1+i_2+1-j_1),k(j_2+1),[0)), & \hbox{if~} j_1<i_2;\\
        (k(i_1+1),k(j_2+1),[0)),         & \hbox{if~} j_1=i_2;\\
        (ki_1,k(j_2+1),[0)),             & \hbox{if~} j_1=i_2+1;\\
        (ki_1,k(j_1+j_2-i_2),[0)),       & \hbox{if~} j_1>i_2+1,
      \end{array}
    \right.
\end{align*}
\begin{align*}
  ((i_1,j_1,[0))\cdot (i_2,j_2,[1)))\delta_k&=
   \left\{
     \begin{array}{ll}
       (i_1+i_2-j_1,j_2,(j_1-i_2+[0))\cap[1))\delta_k, & \hbox{if~} j_1<i_2; \\
       (i_1,j_2,[0)\cap[1))\delta_k,                   & \hbox{if~} j_1=i_2; \\
       (i_1,j_1+j_2-i_2,[0)\cap(i_2-j_1+[1)))\delta_k, & \hbox{if~} j_1>i_2
     \end{array}
   \right.
   \\
   &=
   \left\{
     \begin{array}{ll}
       (i_1+i_2-j_1,j_2,[1))\delta_k, & \hbox{if~} j_1<i_2; \\
       (i_1,j_2,[1))\delta_k,         & \hbox{if~} j_1=i_2; \\
       (i_1,j_1+j_2-i_2,[0))\delta_k, & \hbox{if~} j_1>i_2
     \end{array}
   \right.
   \\
   &=
   \left\{
     \begin{array}{ll}
       (k(i_1+i_2-j_1+1),k(j_2+1),[0)), & \hbox{if~} j_1<i_2; \\
       (k(i_1+1),k(j_2+1),[0)),         & \hbox{if~} j_1=i_2; \\
       (ki_1,k(j_1+j_2-i_2),[0)),       & \hbox{if~} j_1>i_2
     \end{array}
   \right.
   \\
   &=
   \left\{
     \begin{array}{ll}
       (k(i_1+i_2-j_1+1),k(j_2+1),[0)), & \hbox{if~} j_1<i_2; \\
       (k(i_1+1),k(j_2+1),[0)),         & \hbox{if~} j_1=i_2; \\
       (ki_1,k(j_1+j_2-i_2),[0)),       & \hbox{if~} j_1=i_2+1; \\
       (ki_1,k(j_1+j_2-i_2),[0)),       & \hbox{if~} j_1>i_2+1
     \end{array}
   \right.
   \\
    &=
    \left\{
      \begin{array}{ll}
        (k(i_1+i_2+1-j_1),k(j_2+1),[0)), & \hbox{if~} j_1<i_2;\\
        (k(i_1+1),k(j_2+1),[0)),         & \hbox{if~} j_1=i_2;\\
        (ki_1,k(j_2+1),[0)),             & \hbox{if~} j_1=i_2+1;\\
        (ki_1,k(j_1+j_2-i_2),[0)),       & \hbox{if~} j_1>i_2+1,
      \end{array}
    \right.
\end{align*}
and
\begin{align*}
  (i_1,j_1,&[1))\delta_k\cdot (i_2,j_2,[0))\delta_k=(k(i_1+1),k(j_1+1),[0))\cdot(ki_2,kj_2,[0))= \\
   &=
   \left\{
     \begin{array}{ll}
       (k(i_1{+}1){+}ki_2{-}k(j_1{+}1),kj_2,(k(j_1{+}1){-}ki_2{+}[0))\cap[0)), & \hbox{if~} k(j_1+1)<ki_2;\\
       (k(i_1+1),kj_2,[0)\cap[0)),                               & \hbox{if~} k(j_1+1)=ki_2;\\
       (k(i_1{+}1),k(j_1{+}1){+}kj_2{-}ki_2,[0)\cap(ki_2{-}k(j_1{+}1){+}[0))), & \hbox{if~} k(j_1+1)>ki_2
     \end{array}
   \right.
   \\
   &=
   \left\{
     \begin{array}{ll}
       (k(i_1+i_2-j_1),kj_2,[0)),       & \hbox{if~} j_1+1<i_2;\\
       (k(i_1+1),kj_2,[0)),             & \hbox{if~} j_1+1=i_2;\\
       (k(i_1+1),k(j_1+1+j_2-i_2),[0)), & \hbox{if~} j_1+1>i_2
     \end{array}
   \right.
   \\
   &=
   \left\{
     \begin{array}{ll}
       (k(i_1+i_2-j_1),kj_2,[0)),       & \hbox{if~} j_1+1<i_2;\\
       (k(i_1+1),kj_2,[0)),             & \hbox{if~} j_1+1=i_2;\\
       (k(i_1+1),k(j_1+1+j_2-i_2),[0)), & \hbox{if~} j_1=i_2;\\
       (k(i_1+1),k(j_1+1+j_2-i_2),[0)), & \hbox{if~} j_1+1>i_2
     \end{array}
   \right.
   \\
   &=
   \left\{
     \begin{array}{ll}
       (k(i_1+i_2-j_1),kj_2,[0)),       & \hbox{if~} j_1+1<i_2; \\
       (k(i_1+1),kj_2,[0)),             & \hbox{if~} j_1+1=i_2; \\
       (k(i_1+1),k(j_2+1),[0)),         & \hbox{if~} j_1=i_2; \\
       (k(i_1+1),k(j_1+j_2-i_2+1),[0)), & \hbox{if~} j_1>i_2,
     \end{array}
   \right.
\end{align*}
\begin{align*}
  ((i_1,j_1,[1))\cdot (i_2,j_2,[0)))\delta_k&=
   \left\{
     \begin{array}{ll}
       (i_1+i_2-j_1,j_2,(j_1-i_2+[1))\cap[0))\delta_k, & \hbox{if~} j_1<i_2; \\
       (i_1,j_2,[1)\cap[0))\delta_k,                   & \hbox{if~} j_1=i_2; \\
       (i_1,j_1+j_2-i_2,[1)\cap(i_2-j_1+[0)))\delta_k, & \hbox{if~} j_1>i_2
     \end{array}
   \right.
   \\
   &=
   \left\{
     \begin{array}{ll}
       (i_1+i_2-j_1,j_2,[0))\delta_k, & \hbox{if~} j_1<i_2; \\
       (i_1,j_2,[1))\delta_k,         & \hbox{if~} j_1=i_2; \\
       (i_1,j_1+j_2-i_2,[1))\delta_k, & \hbox{if~} j_1>i_2
     \end{array}
   \right.
   \\
   &=
   \left\{
     \begin{array}{ll}
       (k(i_1+i_2-j_1),kj_2,[0)),       & \hbox{if~} j_1+1<i_2; \\
       (k(i_1+i_2-j_1),kj_2,[0)),       & \hbox{if~} j_1+1=i_2; \\
       (k(i_1+1),k(j_2+1),[0)),         & \hbox{if~} j_1=i_2; \\
       (k(i_1+1),k(j_1+j_2-i_2+1),[0)), & \hbox{if~} j_1>i_2
     \end{array}
   \right.
   \\
   &=
   \left\{
     \begin{array}{ll}
       (k(i_1+i_2-j_1),kj_2,[0)),       & \hbox{if~} j_1+1<i_2; \\
       (k(i_1+1),kj_2,[0)),             & \hbox{if~} j_1+1=i_2; \\
       (k(i_1+1),k(j_2+1),[0)),         & \hbox{if~} j_1=i_2; \\
       (k(i_1+1),k(j_1+j_2-i_2+1),[0)), & \hbox{if~} j_1>i_2.
     \end{array}
   \right.
\end{align*}
This completes the proof of the statement of the proposition.
\end{proof}

\begin{remark}\label{remark-2.7}
It obvious that if $\mathfrak{e}$ is the annihilating endomorphism of the monoid $\boldsymbol{B}_{\omega}^{\mathscr{F}}$ then $\mathfrak{e}=\gamma_0=\delta_0$.  
\end{remark}

By $\boldsymbol{End}_0^*(\boldsymbol{B}_{\omega}^{\mathscr{F}})$ we denote the semigroup of all non-injective monoid endomorphisms of the monoid $\boldsymbol{B}_{\omega}^{\mathscr{F}}$ for the family $\mathscr{F}=\{[0),[1)\}$.

\smallskip

Theorems~\ref{theorem-2.8} and~\ref{theorem-2.9} describe the algebraic structure of the semigroup $\boldsymbol{End}_0^*(\boldsymbol{B}_{\omega}^{\mathscr{F}})$.

\begin{theorem}\label{theorem-2.8}
If $\mathscr{F}=\{[0), [1)\}$, then for any non-injective monoid endomorphism $\mathfrak{e}$ of the monoid $\boldsymbol{B}_{\omega}^{\mathscr{F}}$ only one of the following conditions holds:
\begin{enumerate}
  \item\label{theorem-2.8(1)} $\mathfrak{e}$ is the annihilating endomorphism, i.e., $\mathfrak{e}=\gamma_0=\delta_0$;
  \item\label{theorem-2.8(2)} $\mathfrak{e}=\gamma_k$ for some positive integer $k$;
  \item\label{theorem-2.8(3)} $\mathfrak{e}=\delta_k$ for some positive integer $k$.
\end{enumerate}
\end{theorem}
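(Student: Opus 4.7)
The plan is to dichotomize on whether $\mathfrak{e}$ is annihilating. If $\mathfrak{e}$ is annihilating, then Remark~\ref{remark-2.7} gives $\mathfrak{e}=\gamma_0=\delta_0$, yielding case~\eqref{theorem-2.8(1)}. We therefore assume that $\mathfrak{e}$ is non-annihilating and non-injective, and we aim to prove that $\mathfrak{e}=\gamma_k$ or $\mathfrak{e}=\delta_k$ for some positive integer~$k$.

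By Lemma~\ref{lemma-2.3} the image $(\boldsymbol{B}_\omega^{\mathscr{F}})\mathfrak{e}$ is contained in $\boldsymbol{B}_\omega^{\{[0)\}}$, so we separately analyse the two restrictions $\mathfrak{e}_0=\mathfrak{e}\!\upharpoonright_{\boldsymbol{B}_\omega^{\{[0)\}}}$ and $\mathfrak{e}_1=\mathfrak{e}\!\upharpoonright_{\boldsymbol{B}_\omega^{\{[1)\}}}$. By Lemma~\ref{lemma-2.2} each of these is injective (otherwise $\mathfrak{e}$ would be annihilating). Since $\mathfrak{e}_0$ is an injective monoid endomorphism of the bicyclic monoid $\boldsymbol{B}_\omega^{\{[0)\}}$, Lemma~2 of \cite{Gutik-Prokhorenkova-Sekh=2021} forces $(i,j,[0))\mathfrak{e}=(ki,kj,[0))$ for some positive integer~$k$. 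For $\mathfrak{e}_1$, the image of the identity $(0,0,[1))$ of $\boldsymbol{B}_\omega^{\{[1)\}}$ is necessarily an idempotent $(n,n,[0))$ of $\boldsymbol{B}_\omega^{\{[0)\}}$; the restriction $\mathfrak{e}_1$ then factors through the bicyclic submonoid $(n,n,[0))\boldsymbol{B}_\omega^{\{[0)\}}(n,n,[0))=\{(n+i,n+j,[0))\colon i,j\in\omega\}$, which is isomorphic to $\boldsymbol{B}_\omega$ via the shift $(n+i,n+j,[0))\leftrightarrow(i,j)$. Applying the same classification yields $(i,j,[1))\mathfrak{e}=(n+k'i,n+k'j,[0))$ for some positive integer~$k'$ and some $n\in\omega$.

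It remains to show that $k=k'$ and that $n\in\{0,k\}$. Applying $\mathfrak{e}$ to the identity $(0,1,[0))\cdot(1,0,[1))=(0,0,[1))$ and computing the resulting product in $\boldsymbol{B}_\omega^{\{[0)\}}$ via formula~\eqref{eq-1.2} forces $k=k'$. Similarly, applying $\mathfrak{e}$ to $(0,0,[1))\cdot(1,0,[0))=(1,0,[0))$ produces the equation $(n,n,[0))\cdot(k,0,[0))=(k,0,[0))$, which by a short case analysis forces $n\leqslant k$. Finally, the non-injectivity of $\mathfrak{e}$ combined with the injectivity of each $\mathfrak{e}_i$ supplies distinct elements $(a,b,[0))$ and $(c,d,[1))$ with $(a,b,[0))\mathfrak{e}=(c,d,[1))\mathfrak{e}$; equating $(ka,kb,[0))=(n+kc,n+kd,[0))$ gives $k\mid n$, and together with $0\leqslant n\leqslant k$ this forces $n\in\{0,k\}$. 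Substituting $n=0$ yields $\mathfrak{e}=\gamma_k$ (case~\eqref{theorem-2.8(2)}), and $n=k$ yields $\mathfrak{e}=\delta_k$ (case~\eqref{theorem-2.8(3)}).

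The hardest step will be establishing the form of $\mathfrak{e}_1$: the restriction is a semigroup homomorphism from a bicyclic monoid with identity $(0,0,[1))$ into another bicyclic monoid with identity $(0,0,[0))$, and because these identities differ the classification of \cite{Gutik-Prokhorenkova-Sekh=2021} does not apply directly. One must first locate the idempotent image $(n,n,[0))=(0,0,[1))\mathfrak{e}$ and then regard $\mathfrak{e}_1$ as a monoid homomorphism into the shifted bicyclic submonoid at $(n,n,[0))$; once that reduction is in hand, the remaining arithmetic constraints on $k,k',n$ follow from short direct calculations with formula~\eqref{eq-1.2}.
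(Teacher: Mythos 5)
Your proof is correct, and it reaches the paper's parametrization $(i,j,[1))\mathfrak{e}=(s+ki,s+kj,[0))$ with $s\in\{0,k\}$ by a genuinely different route through the middle of the argument. The shared skeleton is the same: the annihilating dichotomy, Lemmas~\ref{lemma-2.2} and~\ref{lemma-2.3} to get injectivity of both restrictions and containment of the image in $\boldsymbol{B}_{\omega}^{\{[0)\}}$, and Lemma~2 of \cite{Gutik-Prokhorenkova-Sekh=2021} to pin down $\mathfrak{e}$ on $\boldsymbol{B}_{\omega}^{\{[0)\}}$. Where you diverge is in handling the restriction to $\boldsymbol{B}_{\omega}^{\{[1)\}}$. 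The paper uses the natural partial order (Proposition~1.4.21$(6)$ of \cite{Lawson=1998}) to locate $(0,0,[1))\mathfrak{e}=(s,s,[0))$ with $0\leqslant s\leqslant k$, computes $(0,1,[1))\mathfrak{e}$ and $(1,0,[1))\mathfrak{e}$ via the inversion, obtains the general formula with an undetermined increment $p$ by taking powers, and then determines $p=k$ by a limiting argument from the sandwich $km\leqslant s+pm\leqslant k(m+1)$ valid for all $m$. You instead observe that the image of the identity $(0,0,[1))$ is an idempotent $(n,n,[0))$, that the image of $\mathfrak{e}_1$ lies in the local submonoid $(n,n,[0))\boldsymbol{B}_{\omega}^{\{[0)\}}(n,n,[0))=\{(n+i,n+j,[0))\colon i,j\in\omega\}\cong\boldsymbol{B}_{\omega}$, and apply the same classification after this shift; the constraints $k'=k$ and $n\leqslant k$ then fall out of two concrete product identities, and $k\mid n$ from non-injectivity (the last point being the contrapositive of the paper's unproved assertion that $0<s<k$ would make $\mathfrak{e}$ injective, which you actually justify). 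Your version buys a cleaner and entirely order-free argument that replaces the asymptotic step by finite computations; the paper's version buys a derivation that does not require checking that the shifted local submonoid is isomorphic to $\boldsymbol{B}_{\omega}$ and that the classification transfers, relying instead on standard facts about inverse semigroup homomorphisms. Both are complete; just make sure, if you write yours up in full, to include the short verification that every element of the image of $\mathfrak{e}_1$ is fixed on both sides by $(n,n,[0))$ and hence has both coordinates at least $n$.
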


\begin{proof}
Fix an arbitrary  non-injective monoid endomorphism $\mathfrak{e}$ of the monoid $\boldsymbol{B}_{\omega}^{\mathscr{F}}$. If $\mathfrak{e}$ is the annihilating endomorphism then statement \eqref{theorem-2.8(1)} holds. Hence, later we assume that the endomorphism $\mathfrak{e}$ is not annihilating.

\smallskip

By Lemma~\ref{lemma-2.2} the restriction $\mathfrak{e}{\upharpoonleft}_{\boldsymbol{B}_{\omega}^{\{[0)\}}}\boldsymbol{B}_{\omega}^{\{[0)\}}\to \boldsymbol{B}_{\omega}^{\mathscr{F}}$ of the endomorphism $\mathfrak{e}$ is an injective mapping. Since by Proposition~3 of \cite{Gutik-Mykhalenych=2020} the subsemigroup $\boldsymbol{B}_{\omega}^{\{[0)\}}$  of $\boldsymbol{B}_{\omega}^{\mathscr{F}}$ are isomorphic to the bicyclic semigroup, the injectivity of the restriction $\mathfrak{e}{\upharpoonleft}_{\boldsymbol{B}_{\omega}^{\{[0)\}}}$ of the endomorphism $\mathfrak{e}$, Proposition~4 of \cite{Gutik-Mykhalenych=2021}, and Lemma~2 of \cite{Gutik-Prokhorenkova-Sekh=2021} imply that there exists a positive integer $k$ such that
\begin{equation}\label{eq-2.1}
  (i,j,[0))\mathfrak{e}=(ki,kj,[0)),
\end{equation}
for all $i,j\in\omega$.

\smallskip

By Lemma~\ref{lemma-2.2} the restriction $\mathfrak{e}{\upharpoonleft}_{\boldsymbol{B}_{\omega}^{\{[1)\}}}\boldsymbol{B}_{\omega}^{\{[1)\}}\to \boldsymbol{B}_{\omega}^{\mathscr{F}}$ of the endomorphism $\mathfrak{e}$ is an injective mapping, and by Lemma~\ref{lemma-2.3} we have that $(\boldsymbol{B}_{\omega}^{\{[1)\}})\mathfrak{e}\subseteq \boldsymbol{B}_{\omega}^{\{[0)\}}$. By Proposition~1.4.21$(6)$ of \cite{Lawson=1998} a homomorphism of inverse semigroups preserves the natural partial order, and hence the following inequalities
\begin{equation*}
  (1,1,[0))\preccurlyeq(0,0,[1))\preccurlyeq(0,0,[0)),
\end{equation*}
Lemma~\ref{lemma-2.3}, and Propositions~2 of \cite{Gutik-Mykhalenych=2021}
imply that
\begin{align*}
  (k,k,[0))&=(1,1,[0))\mathfrak{e}\preccurlyeq \\
   &\preccurlyeq(s,s,[0))= \\
   &=(0,0,[1))\mathfrak{e}\preccurlyeq \\
   &\preccurlyeq (0,0,[0))=\\
   &=(0,0,[0))\mathfrak{e}
\end{align*}
for some $s\in\{0,1,\ldots,k\}$. Again by Proposition~1.4.21$(6)$ of \cite{Lawson=1998} and by Lemma~\ref{lemma-2.3} we get that
\begin{equation*}
  (1,1,[1))\mathfrak{e}=(s+p,s+p,[0))
\end{equation*}
for some non-negative integer $p$. If $p=0$ then $(1,1,[1))\mathfrak{e}=(0,0,[1))\mathfrak{e}$. By Lemma~\ref{lemma-2.2} the endomorphism $\mathfrak{e}$ is annihilating. Hence we assume that $p$ is a positive integer.

\smallskip

Let $(0,1,[1))\mathfrak{e}=(x,y,[0))$ for some $x,y\in\omega$. By Proposition~1.4.21$(1)$ of \cite{Lawson=1998} and Lemma~4 of \cite{Gutik-Mykhalenych=2020} we have that
\begin{align*}
  (1,0,[1))\mathfrak{e}&=((0,1,[1))^{-1})\mathfrak{e}= \\
   &= ((0,1,[1))\mathfrak{e})^{-1}= \\
   &=(x,y,[0))^{-1}= \\
   &=(y,x,[0)).
\end{align*}
Since
\begin{equation*}
(0,1,[1))\cdot(1,0,[1))=(0,0,[1)) \qquad \hbox{and} \qquad (1,0,[1))\cdot(0,1,[1))=(1,1,[1)),
\end{equation*}
the equalities $(0,0,[1))\mathfrak{e}=(s,s,[0))$ and $(1,1,[1))\mathfrak{e}=(s+p,s+p,[0))$ imply that
\begin{align*}
  (s,s,[0))&=(0,0,[1))\mathfrak{e}= \\
   &=((0,1,[1))\cdot(1,0,[1)))\mathfrak{e}= \\
   &=(0,1,[1))\mathfrak{e}\cdot(1,0,[1))\mathfrak{e}= \\
   &=(x,y,[0))\cdot(y,x,[0))= \\
   &=(x,x,[0))
\end{align*}
and
\begin{align*}
  (s+p,s+p,[0))&=(1,1,[1))\mathfrak{e}= \\
   &=((1,0,[1))\cdot(0,1,[1)))\mathfrak{e}= \\
   &=(1,0,[1))\mathfrak{e}\cdot(0,1,[1))\mathfrak{e}= \\
   &=(y,x,[0))\cdot(x,y,[0))= \\
   &=(y,y,[0)).
\end{align*}
This and the definition of the semigroup $\boldsymbol{B}_{\omega}^{\mathscr{F}}$ imply that
\begin{equation*}
  (0,1,[1))\mathfrak{e}=(s,s+p,[0)) \qquad \hbox{and} \qquad (1,0,[1))\mathfrak{e}=(s+p,s,[0)).
\end{equation*}
Then for any positive integers $n_1$ and $n_2$ by usual calculations we get that
\begin{align*}
  (0,n_1,[1))\mathfrak{e}&=(\underbrace{(0,1,[1))\cdot\ldots\cdot(0,1,[1))}_{n_1\hbox{\tiny{-times}}})\mathfrak{e}= \\
   &= \underbrace{(0,1,[1))\mathfrak{e}\cdot\ldots\cdot(0,1,[1))\mathfrak{e}}_{n_1\hbox{\tiny{-times}}}= \\
   &= (s,s+p,[0))^{n_1}= \\
   &=(s,s+n_1p,[0))
\end{align*}
and
\begin{align*}
  (n_2,0,[1))\mathfrak{e}&=(\underbrace{(1,0,[1))\cdot\ldots\cdot(1,0,[1))}_{n_2\hbox{\tiny{-times}}})\mathfrak{e}= \\
   &= \underbrace{(1,0,[1))\mathfrak{e}\cdot\ldots\cdot(1,0,[1))\mathfrak{e}}_{n_2\hbox{\tiny{-times}}}=
   \\
   &= (s+p,s,[0))^{n_2}= \\
   &=(s+n_2p,s,[0)),
\end{align*}
and hence
\begin{equation}\label{eq-2.2}
  (n_1,n_2,[1))\mathfrak{e}=(s+n_1p,s+n_2p,[0)).
\end{equation}

The definition of the natural partial order on the semigroup $\boldsymbol{B}_{\omega}^{\mathscr{F}}$ (see Proposition~4 of \cite{Gutik-Mykhalenych=2021}) imply that for any positive integer $m$ we have that
\begin{equation*}
  (m+1,m+1,[0))\preccurlyeq(m,m,[1))\preccurlyeq(m,m,[0)).
\end{equation*}
Then by equalities \eqref{eq-2.1}, \eqref{eq-2.2}, and Proposition~1.4.21$(6)$ of \cite{Lawson=1998} we obtain that
\begin{align*}
  (k(m+1),k(m+1),[0))&=(m+1,m+1,[0))\mathfrak{e}\preccurlyeq \\
   &\preccurlyeq(s+pm,s+pm,[0))=\\
   &=(m,m,[1))\mathfrak{e}\preccurlyeq \\
   &\preccurlyeq(m,m,[0))\mathfrak{e}=\\
   &=(km,km,[0)).
\end{align*}
The above inequalities  and the definition of the natural partial order on the semigroup $\boldsymbol{B}_{\omega}^{\mathscr{F}}$ (see Proposition~4 of \cite{Gutik-Mykhalenych=2021}) imply that $km\leqslant s+pm\leqslant k(m+1)$ for any positive integer $m$. This implies that
\begin{equation*}
  k\leqslant \frac{s}{m}+p\leqslant k+\frac{1}{m},
\end{equation*}
and since $p$ is a positive integer we get that $p=k$. Hence by \eqref{eq-2.2} we get that
\begin{equation}\label{eq-2.3}
  (n_1,n_2,[1))\mathfrak{e}=(s+n_1k,s+n_2k,[0)),
\end{equation}
for all $n_1,n_2\in\omega$.

\smallskip

It is obvious that if $s\in\{1,\ldots,k-1\}$ then $\mathfrak{e}$ is an injective monoid endomorphism of the semigroup. Hence we have that either $s=0$ or $s=k$, Simple verifications show that
\begin{equation*}
\mathfrak{e}=
\left\{
  \begin{array}{ll}
    \gamma_k, & \hbox{if~} s=0;\\
    \delta_k, & \hbox{if~} s=k.
  \end{array}
\right.
\end{equation*}
This completes the proof of the theorem.
\end{proof}

\begin{theorem}\label{theorem-2.9}
Let $\mathscr{F}=\{[0), [1)\}$. Then for all positive integers $k_1$ and $k_2$ the following conditions hold:
\begin{enumerate}
  \item\label{theorem-2.9(1)} $\gamma_{k_1}\gamma_{k_2}=\gamma_{k_1k_2}$;
  \item\label{theorem-2.9(2)} $\gamma_{k_1}\delta_{k_2}=\gamma_{k_1k_2}$;
  \item\label{theorem-2.9(3)} $\delta_{k_1}\gamma_{k_2}=\delta_{k_1k_2}$;
  \item\label{theorem-2.9(4)} $\delta_{k_1}\delta_{k_2}=\delta_{k_1k_2}$.
\end{enumerate}
\end{theorem}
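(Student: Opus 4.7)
The plan is to verify all four identities by direct computation from the defining formulae of $\gamma_k$ and $\delta_k$ given in Example~\ref{example-2.4} and Example~\ref{example-2.5}. The observation that turns this into a brief calculation is the following: both $\gamma_k$ and $\delta_k$ map the whole monoid $\boldsymbol{B}_{\omega}^{\mathscr{F}}$ into the subsemigroup $\boldsymbol{B}_{\omega}^{\{[0)\}}$, and on a triple $(a,b,[0))$ the two endomorphisms agree, acting by the single rule $(a,b,[0))\mapsto(ka,kb,[0))$. Hence in every one of the four compositions the \emph{second} factor only ever sees elements whose third coordinate is $[0)$, and its effect there is simply multiplication of the first two entries by $k_2$.

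With this in hand, the outcome of each product is determined entirely by the first factor. First I would treat the compositions whose first factor is $\gamma_{k_1}$: for either $F\in\mathscr{F}$ we have $(i,j,F)\gamma_{k_1}=(k_1 i,k_1 j,[0))$, and then applying the second factor (whichever of $\gamma_{k_2}$, $\delta_{k_2}$ it is) yields $(k_1k_2 i,k_1k_2 j,[0))$, which is exactly $(i,j,F)\gamma_{k_1k_2}$. This proves items \eqref{theorem-2.9(1)} and \eqref{theorem-2.9(2)} simultaneously. Next, for the compositions whose first factor is $\delta_{k_1}$, I would split on $F$: the case $F=[0)$ produces $(k_1 i,k_1 j,[0))$ and then $(k_1k_2 i,k_1k_2 j,[0))$, matching $(i,j,[0))\delta_{k_1k_2}$; the case $F=[1)$ produces $(k_1(i+1),k_1(j+1),[0))$ and then $(k_1k_2(i+1),k_1k_2(j+1),[0))$, matching $(i,j,[1))\delta_{k_1k_2}$. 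This proves items \eqref{theorem-2.9(3)} and \eqref{theorem-2.9(4)}.

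There is essentially no obstacle here beyond careful bookkeeping. The only mildly delicate point is to ensure that when $\delta_{k_1}$ is followed by the scalar multiplication by $k_2$, the shift ``$+1$'' originating from the $[1)$-case of $\delta_{k_1}$ propagates correctly through the subsequent multiplication, so that the result reads $k_1k_2(i+1)$ rather than $k_1(k_2 i+k_2)$ written in the wrong form; this is the sort of arithmetic identity that is trivial but must be stated in the form that matches the definition of $\delta_{k_1k_2}$.
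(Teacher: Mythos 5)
Your proof is correct and follows essentially the same route as the paper: both verify the four identities by direct evaluation of the composites on $(i,j,[0))$ and $(i,j,[1))$ using the defining formulae of $\gamma_k$ and $\delta_k$. Your one streamlining — observing that the first factor always lands in $\boldsymbol{B}_{\omega}^{\{[0)\}}$, where $\gamma_{k_2}$ and $\delta_{k_2}$ act identically by $(a,b,[0))\mapsto(k_2a,k_2b,[0))$ — is exactly the observation implicit in the paper's case-by-case computation.
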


\begin{proof}[\textsl{Proof}]
\eqref{theorem-2.9(1)}
For any $i,j\in\omega$ we have that
\begin{align*}
  (i,j,[0))\gamma_{k_1}\gamma_{k_2}&=(k_1i,k_1j,[0))\gamma_{k_2}=\\
            &=(k_1k_2i,k_1k_2j,[0)),
\end{align*}
and $(i,j,[1))\gamma_{k_1}=(i,j,[0))\gamma_{k_1}$. This implies  that $\gamma_{k_1}\gamma_{k_2}=\gamma_{k_1k_2}$.

\medskip
\eqref{theorem-2.9(2)}
Since
\begin{align*}
  (i,j,[0))\gamma_{k_1}\delta_{k_2}&=(k_1i,k_1j,[0))\delta_{k_2}=\\
    &=(k_1k_2i,k_1k_2j,[0)),
\end{align*}
and $(i,j,[1))\gamma_{k_1}=(i,j,[0))\gamma_{k_1}$ for all $i,j\in\omega$, we get that $\gamma_{k_1}\delta_{k_2}=\gamma_{k_1k_2}$.

\medskip
\eqref{theorem-2.9(3)}
For any $i,j\in\omega$ we have that
\begin{align*}
  (i,j,[0))\delta_{k_1}\gamma_{k_2}&=(k_1i,k_1j,[0))\gamma_{k_2}=\\
    &=(k_1k_2i,k_1k_2j,[0)),
\end{align*}
and
\begin{align*}
  (i,j,[1))\delta_{k_1}\gamma_{k_2}&=(k_1(i+1),k_1(j+1),[0))\gamma_{k_2}=\\
    &=(k_1k_2(i+1),k_1k_2(j+1),[0)),
\end{align*}
and hence  $\delta_{k_1}\gamma_{k_2}=\delta_{k_1k_2}$.

\medskip
\eqref{theorem-2.9(4)}
For any $i,j\in\omega$ we have that
\begin{align*}
  (i,j,[0))\delta_{k_1}\delta_{k_2}&=(k_1i,k_1j,[0))\delta_{k_2}=\\
     &=(k_1k_2i,k_1k_2j,[0)),
\end{align*}
and
\begin{align*}
  (i,j,[1))\delta_{k_1}\delta_{k_2}&=(k_1(i+1),k_1(j+1),[0))\delta_{k_2}=\\
     &=(k_1k_2(i+1),k_1k_2(j+1),[0)),
\end{align*}
and hence  $\delta_{k_1}\delta_{k_2}=\delta_{k_1k_2}$.
\end{proof}

By $\mathfrak{e}_{\boldsymbol{0}}$ we denote the annihilating monoid endomorphism of the monoid $\boldsymbol{B}_{\omega}^{\mathscr{F}}$ for the family \linebreak $\mathscr{F}=\{[0),[1)\}$, i.e., $(i,j,[p))\mathfrak{e}_{\boldsymbol{0}}=(0,0,[0))$ for all $i,j\in\omega$ and $p=0,1$. We put $\boldsymbol{End}^*(\boldsymbol{B}_{\omega}^{\mathscr{F}})=\boldsymbol{End}_0^*(\boldsymbol{B}_{\omega}^{\mathscr{F}})\setminus \{\mathfrak{e}_{\boldsymbol{0}}\}$. Theorem~\ref{theorem-2.9} implies that $\boldsymbol{End}^*(\boldsymbol{B}_{\omega}^{\mathscr{F}})$ is a subsemigroup of $\boldsymbol{End}_0^*(\boldsymbol{B}_{\omega}^{\mathscr{F}})$.

\smallskip

Theorem~\ref{theorem-2.9} implies the following corollary.

\begin{corollary}
If $\mathscr{F}=\{[0), [1)\}$, then the elements $\gamma_1$ and $\delta_1$ are unique idempotents of the semigroup $\boldsymbol{End}^*(\boldsymbol{B}_{\omega}^{\mathscr{F}})$.
\end{corollary}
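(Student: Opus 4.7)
The plan is to read off the statement immediately from Theorems~\ref{theorem-2.8} and~\ref{theorem-2.9}. By Theorem~\ref{theorem-2.8}, every element of $\boldsymbol{End}^*(\boldsymbol{B}_{\omega}^{\mathscr{F}})$ is either $\gamma_k$ or $\delta_k$ for some positive integer $k$, so it suffices to determine for which positive integers $k$ the endomorphisms $\gamma_k$ and $\delta_k$ are idempotent.

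First, I would use parts \eqref{theorem-2.9(1)} and \eqref{theorem-2.9(4)} of Theorem~\ref{theorem-2.9}: applying them with $k_1=k_2=k$ gives $\gamma_k\gamma_k=\gamma_{k^2}$ and $\delta_k\delta_k=\delta_{k^2}$. The equations $\gamma_{k^2}=\gamma_k$ and $\delta_{k^2}=\delta_k$ each force $k^2=k$ (since the maps $k\mapsto\gamma_k$ and $k\mapsto\delta_k$ are injective on positive integers, as $\gamma_k$ and $\delta_k$ are uniquely determined by their action on $(1,0,[0))$ or $(1,0,[1))$), and hence $k=1$. Thus the only candidates for idempotents are $\gamma_1$ and $\delta_1$.

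Next I would verify that $\gamma_1$ and $\delta_1$ really are idempotents of $\boldsymbol{End}^*(\boldsymbol{B}_{\omega}^{\mathscr{F}})$: by Theorem~\ref{theorem-2.9}\eqref{theorem-2.9(1)} we have $\gamma_1\gamma_1=\gamma_{1\cdot 1}=\gamma_1$ and by \eqref{theorem-2.9(4)} we have $\delta_1\delta_1=\delta_{1\cdot 1}=\delta_1$, and both $\gamma_1,\delta_1$ lie in $\boldsymbol{End}^*(\boldsymbol{B}_{\omega}^{\mathscr{F}})$ because they are non-injective (identifying elements with $[0)$- and $[1)$-third coordinates) and non-annihilating. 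Finally, $\gamma_1\neq\delta_1$ since, e.g., $(0,0,[1))\gamma_1=(0,0,[0))$ while $(0,0,[1))\delta_1=(1,1,[0))$. I do not anticipate any genuine obstacle here; the whole corollary is essentially a one-line consequence of the multiplication table established in Theorem~\ref{theorem-2.9}, and the only minor care needed is to record that distinct pairs $(k,\text{type})$ give distinct endomorphisms.
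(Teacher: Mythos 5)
Your proof is correct and follows exactly the route the paper intends: the corollary is stated there without proof as an immediate consequence of Theorem~\ref{theorem-2.9} (together with the classification in Theorem~\ref{theorem-2.8}), and your argument via $\gamma_k\gamma_k=\gamma_{k^2}$, $\delta_k\delta_k=\delta_{k^2}$ and the injectivity of $k\mapsto\gamma_k$, $k\mapsto\delta_k$ is precisely the reasoning being left to the reader. The extra care you take in noting that distinct parameters give distinct endomorphisms is appropriate and closes the only small gap one could worry about.
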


Next, by ${\mathfrak{LZ}}_2$ we denote the left zero semigroup with two elements and by $\mathbb{N}_u$ the multiplicative semigroup of positive integers.

\begin{proposition}\label{proposition-2.10}
Let $\mathscr{F}=\{[0), [1)\}$. Then the semigroup $\boldsymbol{End}^*(\boldsymbol{B}_{\omega}^{\mathscr{F}})$ is isomorphic to the direct product ${\mathfrak{LZ}}_2\times \mathbb{N}_u$.
\end{proposition}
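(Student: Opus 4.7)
The plan is to write down an explicit bijection between $\boldsymbol{End}^*(\boldsymbol{B}_{\omega}^{\mathscr{F}})$ and ${\mathfrak{LZ}}_2\times\mathbb{N}_u$ and verify that the multiplication tables match. By Theorem~\ref{theorem-2.8}, every element of $\boldsymbol{End}^*(\boldsymbol{B}_{\omega}^{\mathscr{F}})$ is uniquely of the form $\gamma_k$ or $\delta_k$ for some positive integer $k$, and the endomorphisms $\gamma_k$ and $\delta_{k'}$ are distinct for all choices (their values on $(0,0,[1))$ differ: $(0,0,[1))\gamma_k=(0,0,[0))$ while $(0,0,[1))\delta_{k'}=(k',k',[0))$).

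Denote the two elements of ${\mathfrak{LZ}}_2$ by $\mathbf{a}$ and $\mathbf{b}$, so that $xy=x$ for all $x,y\in{\mathfrak{LZ}}_2$. Define
\begin{equation*}
\Phi\colon \boldsymbol{End}^*(\boldsymbol{B}_{\omega}^{\mathscr{F}})\to {\mathfrak{LZ}}_2\times \mathbb{N}_u, \qquad \gamma_k\mapsto (\mathbf{a},k), \quad \delta_k\mapsto (\mathbf{b},k).
\end{equation*}
The map $\Phi$ is a bijection by the uniqueness remark above.

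It remains to check that $\Phi$ is a homomorphism, and this is exactly what Theorem~\ref{theorem-2.9} gives us. Indeed, using $\mathbf{a}\mathbf{a}=\mathbf{a}\mathbf{b}=\mathbf{a}$ and $\mathbf{b}\mathbf{a}=\mathbf{b}\mathbf{b}=\mathbf{b}$, one verifies
\begin{align*}
(\gamma_{k_1}\gamma_{k_2})\Phi &= \gamma_{k_1k_2}\Phi=(\mathbf{a},k_1k_2)=(\mathbf{a},k_1)(\mathbf{a},k_2)=(\gamma_{k_1}\Phi)(\gamma_{k_2}\Phi),\\
(\gamma_{k_1}\delta_{k_2})\Phi  &= \gamma_{k_1k_2}\Phi=(\mathbf{a},k_1k_2)=(\mathbf{a},k_1)(\mathbf{b},k_2)=(\gamma_{k_1}\Phi)(\delta_{k_2}\Phi),\\
(\delta_{k_1}\gamma_{k_2})\Phi &= \delta_{k_1k_2}\Phi=(\mathbf{b},k_1k_2)=(\mathbf{b},k_1)(\mathbf{a},k_2)=(\delta_{k_1}\Phi)(\gamma_{k_2}\Phi),\\
(\delta_{k_1}\delta_{k_2})\Phi &= \delta_{k_1k_2}\Phi=(\mathbf{b},k_1k_2)=(\mathbf{b},k_1)(\mathbf{b},k_2)=(\delta_{k_1}\Phi)(\delta_{k_2}\Phi).
\end{align*}
There is essentially no obstacle here; the whole content of the statement is the observation that the four multiplication rules of Theorem~\ref{theorem-2.9} depend only on the letter ($\gamma$ or $\delta$) of the \emph{left} factor, which is exactly the left-zero law on the first coordinate.
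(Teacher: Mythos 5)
Your proof is correct and follows essentially the same route as the paper: the paper likewise defines the map $\gamma_k\mapsto(c,k)$, $\delta_k\mapsto(d,k)$, observes it is bijective, and cites Theorem~\ref{theorem-2.9} for the homomorphism property. Your version merely spells out the bijectivity check and the four multiplication verifications that the paper leaves implicit.
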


\begin{proof} 
Put ${LZ}_2=\{c,d\}$. We define a map $\mathfrak{I}\colon \boldsymbol{End}^*(\boldsymbol{B}_{\omega}^{\mathscr{F}})\to {\mathfrak{LZ}}_2\times \mathbb{N}_u$ by the formula
\begin{equation*}
  (\mathfrak{e})\mathfrak{I}=
\left\{
  \begin{array}{ll}
    (c,k), & \hbox{if~} \mathfrak{e}=\gamma_k; \\
    (d,k), & \hbox{if~} \mathfrak{e}=\delta_k.
  \end{array}
\right.
\end{equation*}
It is obvious that such defined map $\mathfrak{I}$ is bijective, and by Theorem~\ref{theorem-2.9} it is a homomor\-ph\-ism.
\end{proof}


Theorem~\ref{theorem-2.11} describes Green's relations on the semigroup $\boldsymbol{End}^*(\boldsymbol{B}_{\omega}^{\mathscr{F}})$. Later by $\boldsymbol{End}^*(\boldsymbol{B}_{\omega}^{\mathscr{F}})^1$ we denote the semigroup $\boldsymbol{End}^*(\boldsymbol{B}_{\omega}^{\mathscr{F}})$ with adjoined identity element.

\begin{theorem}\label{theorem-2.11}
Let $\mathscr{F}=\{[0), [1)\}$. Then the following statements hold:
\begin{enumerate}
  \item\label{theorem-2.11(1)} $\gamma_{k_1}\mathscr{R}\gamma_{k_2}$ in $\boldsymbol{End}^*(\boldsymbol{B}_{\omega}^{\mathscr{F}})$ if and only if $k_1=k_2$;

  \item\label{theorem-2.11(2)} $\gamma_{k_1}\mathscr{R}\delta_{k_2}$ does not hold in $\boldsymbol{End}^*(\boldsymbol{B}_{\omega}^{\mathscr{F}})$ for any $\gamma_{k_1},\delta_{k_2}$;

  \item\label{theorem-2.11(3)} $\delta_{k_1}\mathscr{R}\delta_{k_2}$ in $\boldsymbol{End}^*(\boldsymbol{B}_{\omega}^{\mathscr{F}})$ if and only if $k_1=k_2$;

  \item\label{theorem-2.11(4)} $\gamma_{k_1}\mathscr{L}\gamma_{k_2}$ in $\boldsymbol{End}^*(\boldsymbol{B}_{\omega}^{\mathscr{F}})$ if and only if $k_1=k_2$;

  \item\label{theorem-2.11(5)} $\gamma_{k_1}\mathscr{L}\delta_{k_2}$ in $\boldsymbol{End}^*(\boldsymbol{B}_{\omega}^{\mathscr{F}})$ if and only if $k_1=k_2$;

  \item\label{theorem-2.11(6)} $\delta_{k_1}\mathscr{L}\delta_{k_2}$ in $\boldsymbol{End}^*(\boldsymbol{B}_{\omega}^{\mathscr{F}})$ if and only if $k_1=k_2$;

  \item\label{theorem-2.11(7)} $\mathscr{H}$ is the identity relation on $\boldsymbol{End}^*(\boldsymbol{B}_{\omega}^{\mathscr{F}})$;

  \item\label{theorem-2.11(8)} $\mathfrak{e}_1\mathscr{D}\mathfrak{e}_2$ in $\boldsymbol{End}^*(\boldsymbol{B}_{\omega}^{\mathscr{F}})$ if and only if $\mathfrak{e}_1=\mathfrak{e}_2$ or there exists a positive integer $k$ such that $\mathfrak{e}_1,\mathfrak{e}_2\in \{\gamma_k,\delta_k\}$;

  \item\label{theorem-2.11(9)} $\mathscr{D}=\mathscr{J}$ in $\boldsymbol{End}^*(\boldsymbol{B}_{\omega}^{\mathscr{F}})$.
\end{enumerate}
\end{theorem}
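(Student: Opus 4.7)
The plan is to exploit the explicit multiplication table of Theorem~\ref{theorem-2.9}. Two structural features stand out: in any product $\eta_{k_1}\zeta_{k_2}$ the subscripts multiply and the type (whether the result is a $\gamma$ or a $\delta$) is inherited from the left factor, while the right factor's type is irrelevant. This means left-multiplication ``mixes'' types but right-multiplication is ``type-preserving''. The whole theorem should drop out once this dichotomy is made precise via the principal one-sided and two-sided ideals.

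First I would compute, directly from Theorem~\ref{theorem-2.9},
\begin{align*}
\gamma_k S^1 &= \{\gamma_{km}\colon m\in\mathbb{N}\}, \\
\delta_k S^1 &= \{\delta_{km}\colon m\in\mathbb{N}\}, \\
S^1\gamma_k  &= S^1\delta_k = \{\gamma_{mk}\colon m\in\mathbb{N}\}\cup\{\delta_{mk}\colon m\in\mathbb{N}\},
\end{align*}
where $S=\boldsymbol{End}^*(\boldsymbol{B}_{\omega}^{\mathscr{F}})$; these are one-line consequences of the multiplication rules once one observes that the generator itself is recovered by taking $m=1$, so adjoining the identity to $S$ produces no new elements. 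From the formulas for $\gamma_k S^1$ and $\delta_k S^1$ the statements \eqref{theorem-2.11(1)}, \eqref{theorem-2.11(2)}, \eqref{theorem-2.11(3)} follow immediately: two principal right ideals never meet across types, and within one type, equality of $\{k_1m\colon m\in\mathbb{N}\}$ and $\{k_2m\colon m\in\mathbb{N}\}$ in $\mathbb{N}_u$ forces $k_1=k_2$. From the third formula, the single nontrivial $\mathscr{L}$-identification $\gamma_k\mathscr{L}\delta_k$ drops out, yielding \eqref{theorem-2.11(4)}--\eqref{theorem-2.11(6)}. Since \eqref{theorem-2.11(1)}--\eqref{theorem-2.11(3)} show that $\mathscr{R}$ coincides with the equality relation, \eqref{theorem-2.11(7)} is automatic, and $\mathscr{D}=\mathscr{L}\circ\mathscr{R}=\mathscr{L}$ proves \eqref{theorem-2.11(8)}.

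For \eqref{theorem-2.11(9)} I would compute $S^1\mathfrak{e}S^1$ for each $\mathfrak{e}\in S$. The inclusion $\mathscr{D}\subseteq\mathscr{J}$ always holds, so only the reverse inclusion needs attention, and it amounts to checking that the two-sided ideal generated by $\gamma_k$ (or by $\delta_k$) equals $\{\gamma_{mk},\delta_{mk}\colon m\in\mathbb{N}\}$. The computation takes one line if one starts from the already-computed right ideal $\gamma_k S^1$ (respectively $\delta_k S^1$) and multiplies on the left. Since this two-sided ideal pins down $k$ uniquely but forgets the type, the $\mathscr{J}$-classes are exactly the pairs $\{\gamma_k,\delta_k\}$, matching \eqref{theorem-2.11(8)}.

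The only subtle bookkeeping point throughout is the adjoined identity: because $S$ itself has no identity, one must verify that $S^1$ never contributes elements that would cross between types in a one-sided principal ideal; this is immediate from the fact that $\mathfrak{e}\cdot 1=\mathfrak{e}$ keeps the type of $\mathfrak{e}$. I do not anticipate a genuine obstacle; the argument is essentially mechanical given Theorem~\ref{theorem-2.9}, and the care lies only in organising the nine statements so that the computations for $\mathscr{R}$, $\mathscr{L}$, and $\mathscr{J}$ are each done once and the remaining items follow from the general identities $\mathscr{H}=\mathscr{R}\cap\mathscr{L}$ and $\mathscr{D}=\mathscr{L}\circ\mathscr{R}$.
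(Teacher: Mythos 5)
Your proposal is correct and rests on exactly the same engine as the paper's proof: the multiplication table of Theorem~\ref{theorem-2.9} shows that subscripts multiply while the type ($\gamma$ versus $\delta$) is inherited from the left factor, so all nine statements reduce to mutual divisibility in $\mathbb{N}$. The only difference is organisational — you compute the principal one-sided and two-sided ideals as explicit sets and compare them, whereas the paper argues element-wise from the factorisations $\gamma_{k_1}=\gamma_{k_2}\mathfrak{e}_1$, etc. — and your explicit computations of $\gamma_kS^1$, $S^1\gamma_k=S^1\delta_k$ and $S^1\gamma_kS^1$ are all accurate.
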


\begin{proof}[\textsl{Proof}]
\eqref{theorem-2.11(1)} $(\Rightarrow)$ Suppose that $\gamma_{k_1}\mathscr{R}\gamma_{k_2}$ in $\boldsymbol{End}^*(\boldsymbol{B}_{\omega}^{\mathscr{F}})$. Then there exist $\mathfrak{e}_1,\mathfrak{e}_2\in\boldsymbol{End}^*(\boldsymbol{B}_{\omega}^{\mathscr{F}})^1$ such that $\gamma_{k_1}=\gamma_{k_2}\mathfrak{e}_1$ and $\gamma_{k_2}=\gamma_{k_1}\mathfrak{e}_2$. The equality $\gamma_{k_1}=\gamma_{k_2}\mathfrak{e}_1$ and Theorem~\ref{theorem-2.9} imply that there exists a positive integer $p$ such that either $\mathfrak{e}_1=\gamma_p$ or $\mathfrak{e}_1=\delta_p$. In both above cases by Theorem~\ref{theorem-2.9} we have that
\begin{equation*}
  \gamma_{k_1}=\gamma_{k_2}\mathfrak{e}_1=\gamma_{k_2}\gamma_p=\gamma_{k_2}\delta_p=\gamma_{k_2p},
\end{equation*}
and hence $k_2|k_1$. The proof of the statement that $\gamma_{k_2}=\gamma_{k_1}\mathfrak{e}_2$ implies that $k_1|k_2$ is similar. Therefore we get that $k_1=k_2$.

\smallskip

Implication $(\Leftarrow)$ is trivial.

\medskip
Statement \eqref{theorem-2.11(2)} follows from Theorem~\ref{theorem-2.9}\eqref{theorem-2.9(2)}.

\medskip
The proof of statement \eqref{theorem-2.11(3)} is similar to \eqref{theorem-2.11(1)}.

\medskip
\eqref{theorem-2.11(4)} $(\Rightarrow)$ Suppose that $\gamma_{k_1}\mathscr{L}\gamma_{k_2}$ in $\boldsymbol{End}^*(\boldsymbol{B}_{\omega}^{\mathscr{F}})$. Then there exist $\mathfrak{e}_1,\mathfrak{e}_2\in\boldsymbol{End}^*(\boldsymbol{B}_{\omega}^{\mathscr{F}})^1$ such that $\gamma_{k_1}=\mathfrak{e}_1\gamma_{k_2}$ and $\gamma_{k_2}=\mathfrak{e}_2\gamma_{k_1}$. The equality $\gamma_{k_1}=\mathfrak{e}_1\gamma_{k_2}$ and Theorem~\ref{theorem-2.9} imply that there exists a positive integer $p$ such that $\mathfrak{e}_1=\gamma_p$. Then we have that
\begin{equation*}
  \gamma_{k_1}=\mathfrak{e}_1\gamma_{k_2}=\gamma_p\gamma_{k_2}=\gamma_{p k_2},
\end{equation*}
and hence $k_2|k_1$. The proof of the statement that $\gamma_{k_2}=\mathfrak{e}_2\gamma_{k_1}$ implies that $k_1|k_2$ is similar. Therefore we get that $k_1=k_2$.

\smallskip

Implication $(\Leftarrow)$ is trivial.

\medskip
\eqref{theorem-2.11(5)} $(\Rightarrow)$ Suppose that $\gamma_{k_1}\mathscr{L}\delta_{k_2}$ in $\boldsymbol{End}^*(\boldsymbol{B}_{\omega}^{\mathscr{F}})$. Then there exist $\mathfrak{e}_1,\mathfrak{e}_2\in\boldsymbol{End}^*(\boldsymbol{B}_{\omega}^{\mathscr{F}})^1$ such that $\gamma_{k_1}=\mathfrak{e}_1\delta_{k_2}$ and $\delta_{k_2}=\mathfrak{e}_2\gamma_{k_1}$. The equality $\gamma_{k_1}=\mathfrak{e}_1\delta_{k_2}$ and Theorem~\ref{theorem-2.9} imply that there exists a positive integer $p$ such that $\mathfrak{e}_1=\gamma_p$. Then we have that
\begin{equation*}
  \gamma_{k_1}=\mathfrak{e}_1\delta_{k_2}=\gamma_p\delta_{k_2}=\gamma_{p k_2},
\end{equation*}
and hence $k_2|k_1$. The equality $\delta_{k_2}=\mathfrak{e}_2\gamma_{k_1}$ and Theorem~\ref{theorem-2.9} imply that there exists a positive integer $q$ such that $\mathfrak{e}_1=\delta_q$. Then we have that
\begin{equation*}
  \delta_{k_2}=\mathfrak{e}_2\gamma_{k_1}=\delta_q\gamma_{k_1}=\gamma_{qk_1},
\end{equation*}
and hence $k_1|k_2$. Thus we get that $k_1=k_2$.

\smallskip

Implication $(\Leftarrow)$ is trivial.

\medskip
\eqref{theorem-2.11(6)} $(\Rightarrow)$ Suppose that $\delta_{k_1}\mathscr{L}\delta_{k_2}$ in $\boldsymbol{End}^*(\boldsymbol{B}_{\omega}^{\mathscr{F}})$. Then there exist $\mathfrak{e}_1,\mathfrak{e}_2\in\boldsymbol{End}^*(\boldsymbol{B}_{\omega}^{\mathscr{F}})^1$ such that $\delta_{k_1}=\mathfrak{e}_1\delta_{k_2}$ and $\delta_{k_2}=\mathfrak{e}_2\delta_{k_1}$. The equality $\delta_{k_1}=\mathfrak{e}_1\delta_{k_2}$ and Theorem~\ref{theorem-2.9} imply that there exists a positive integer $p$ such that $\mathfrak{e}_1=\delta_p$. Then we have that
\begin{equation*}
  \delta_{k_1}=\mathfrak{e}_1\delta_{k_2}=\delta_p\delta_{k_2}=\delta_{p k_2},
\end{equation*}
and hence $k_2|k_1$. The proof of the statement that $\delta_{k_2}=\mathfrak{e}_2\delta_{k_1}$ implies that $k_1|k_2$ is similar. Hence we get that $k_1=k_2$.

\smallskip

Implication $(\Leftarrow)$ is trivial.

\medskip
\eqref{theorem-2.11(7)} By statements \eqref{theorem-2.11(1)}, \eqref{theorem-2.11(2)}, and \eqref{theorem-2.11(3)}, $\mathscr{R}$ is the identity relation on the semigroup $\boldsymbol{End}^*(\boldsymbol{B}_{\omega}^{\mathscr{F}})$. Then so is $\mathscr{H}$, because $\mathscr{H}\subseteq \mathscr{R}$.

\medskip
Statement \eqref{theorem-2.11(8)} follows from statements \eqref{theorem-2.11(1)}--\eqref{theorem-2.11(6)}.

\medskip
\eqref{theorem-2.11(9)} Suppose to the contrary that $\mathscr{D}\neq\mathscr{J}$ in $\boldsymbol{End}^*(\boldsymbol{B}_{\omega}^{\mathscr{F}})$. Since $\mathscr{D}\subseteq\mathscr{J}$, statement \eqref{theorem-2.11(8)} implies that there exist $\mathfrak{e}_1,\mathfrak{e}_2\in\boldsymbol{End}^*(\boldsymbol{B}_{\omega}^{\mathscr{F}})^1$ such that $\mathfrak{e}_1\mathscr{J}\mathfrak{e}_2$ and $\mathfrak{e}_1,\mathfrak{e}_2\notin \{\gamma_k,\delta_k\}$ for any positive integer $k$. Then there exist distinct positive integers $k_1$ and $k_2$ such that $\mathfrak{e}_1\in \{\gamma_{k_1},\delta_{k_1}\}$ and $\mathfrak{e}_2\in \{\gamma_{k_2},\delta_{k_2}\}$. Without loss of generality we may assume that $k_1<k_2$. Since $\mathfrak{e}_1\mathscr{J}\mathfrak{e}_2$ there exist $\mathfrak{e}_1^{\prime},\mathfrak{e}_2^{\prime}, \mathfrak{e}_1^{\prime\prime},\mathfrak{e}_2^{\prime\prime}\in\boldsymbol{End}^*(\boldsymbol{B}_{\omega}^{\mathscr{F}})^1$ such that $\mathfrak{e}_1=\mathfrak{e}_1^{\prime}\mathfrak{e}_2\mathfrak{e}_1^{\prime\prime}$ and $\mathfrak{e}_2=\mathfrak{e}_2^{\prime}\mathfrak{e}_2\mathfrak{e}_2^{\prime\prime}$. Since $\mathfrak{e}_1\in \{\gamma_{k_1},\delta_{k_1}\}$ and $\mathfrak{e}_2\in \{\gamma_{k_2},\delta_{k_2}\}$, the equality $\mathfrak{e}_1=\mathfrak{e}_1^{\prime}\mathfrak{e}_2\mathfrak{e}_1^{\prime\prime}$, Theorems~\ref{theorem-2.8} and ~\ref{theorem-2.9} imply that $k_2|k_1$. This contradicts the inequality $k_1<k_2$. The obtained contradiction implies the requested statement
\end{proof}

\begin{remark}\label{remark-2.13}
Since $\mathfrak{e}_{\boldsymbol{0}}$ is zero of the semigroup $\boldsymbol{End}_0^*(\boldsymbol{B}_{\omega}^{\mathscr{F}})$, the classes of equivalence of Green's relations of non-zero elements of $\boldsymbol{End}_0^*(\boldsymbol{B}_{\omega}^{\mathscr{F}})$ in the semigroup  $\boldsymbol{End}_0^*(\boldsymbol{B}_{\omega}^{\mathscr{F}})$ coincide with their corresponding  classes of equivalence in $\boldsymbol{End}^*(\boldsymbol{B}_{\omega}^{\mathscr{F}})$, and moreover we have that
\begin{equation*}
\boldsymbol{L}_{\mathfrak{e}_{\boldsymbol{0}}}=\boldsymbol{R}_{\mathfrak{e}_{\boldsymbol{0}}}=\boldsymbol{H}_{\mathfrak{e}_{\boldsymbol{0}}}=
\boldsymbol{D}_{\mathfrak{e}_{\boldsymbol{0}}}= \boldsymbol{J}_{\mathfrak{e}_{\boldsymbol{0}}}=\{\mathfrak{e}_{\boldsymbol{0}}\}
\end{equation*}
in the semigroup $\boldsymbol{End}_0^*(\boldsymbol{B}_{\omega}^{\mathscr{F}})$.
\end{remark}

\section*{\textbf{Acknowledgements}}

The authors acknowledge the referee for his/her comments and suggestions.

\end{document}